\newcommand{\marginparstretch}{0.6}
\let\oldmarginpar\marginpar
\renewcommand\marginpar[1]{\-\oldmarginpar[\framebox{\setstretch{\marginparstretch}\begin{minipage}{\marginparwidth}{\raggedleft\tiny #1}\end{minipage}}]{\framebox{\setstretch{\marginparstretch}\begin{minipage}{\marginparwidth}{\raggedright\tiny #1}\end{minipage}}}}
\tikzset{
        cvertex/.style={circle,draw=black,inner sep=1pt,outer sep=3pt},
        vertex/.style={circle,fill=black,inner sep=1pt,outer sep=3pt},
        star/.style={circle,fill=yellow,inner sep=0.75pt,outer sep=0.75pt},
        tvertex/.style={inner sep=1pt,font=\scriptsize},
        gap/.style={inner sep=0.5pt,fill=white}}
\tikzstyle{mybox} = [draw=black, fill=blue!10, very thick,
\tikzstyle{boxtitle} =[fill=blue!50, text=white,rectangle,rounded corners]
\newtheorem{thm}{Theorem}[section]
\newtheorem{prop}[thm]{Proposition}
\newtheorem{lem}[thm]{Lemma}
\newtheorem{cor}[thm]{Corollary}
\theoremstyle{definition}
\newtheorem{remark}[thm]{Remark}
\numberwithin{equation}{section}
\newcommand{\nc}{\newcommand}
\nc{\R}{\mathbb{R}}
\nc{\Z}{\mathbb{Z}}
\nc{\C}{\mathbb{C}}
\nc{\Q}{\mathbb{Q}}
\nc{\N}{\mathbb{N}}
\nc{\s}{\mathfrak{S}}
\nc{\mc}{\mathcal}
\nc{\mf}{\mathfrak}
\nc{\g}{\mf{g}}
\newcommand{\idot}{{\:\raisebox{2pt}{\text{\circle*{1.5}}}}}
\nc{\CM}{\mathsf{CM}}
\nc{\bc}{\mathbf{c}}
\nc{\uGamma}{{\Gamma'}}
\nc{\ds}{\dots}
\nc{\h}{\mf{h}}
\def\Spec{\mathop{\rm Spec}\nolimits}
\nc{\Cs}{\C^{\times}}
\nc{\an}{\mathrm{an}}
\nc{\aff}{\mathrm{aff}}
\nc{\Yle}{Y_{\le 1}}
\DeclareMathOperator{\Pic}{\mathrm{Pic}}
\nc{\reg}{\mathrm{reg}}
\nc{\Irr}{\mathrm{Irr} }
\DeclareMathOperator{\Hom}{\mathrm{Hom}}
\newcommand{\iso}{{\;\stackrel{_\sim}{\to}\;}}
\nc{\mbf}{\mathbf}
\renewcommand{\o}{\otimes}
\begin{document}

\title{\textsc{Counting resolutions of symplectic quotient singularities}}

\author{Gwyn Bellamy}

\address{School of Mathematics and Statistics, University Gardens, University of Glasgow, Glasgow,  G12 8QW, UK.}
\email{gwyn.bellamy@glasgow.ac.uk}

\begin{abstract}
Let $\Gamma$ be a finite subgroup of $\mathrm{Sp}(V)$. In this article we count the number of symplectic resolutions admitted by the quotient singularity $V / \Gamma$. Our approach is to compare the universal Poisson deformation of the symplectic quotient singularity with the deformation given by the Calogero-Moser space. In this way, we give a simple formula for the number of $\Q$-factorial terminalizations admitted by the symplectic quotient singularity in terms of the dimension of a certain Orlik-Solomon algebra naturally associated to the Calogero-Moser deformation. This dimension is explicitly calculated for all groups $\Gamma$ for which it is known that $V / \Gamma$ admits a symplectic resolution. As a consequence of our results, we confirm a conjecture of Ginzburg and Kaledin.  
\end{abstract}
\subjclass[2010]{Primary 14E15; Secondary 14E30, 16S80, 17B63}
\thanks{The author would like to thank Y. Namikawa for bring the problem of comparing the universal and Calogero-Moser deformations to his attention, and for several simulating discussions. The author would like to thank T. Schedler and M. Lehn for helpful comments on earlier drafts of the work. He would also like to thank the referee for several insightfully comments. The author is supported by the EPSRC grant~EP-H028153.}
\maketitle
\parindent 20pt
\parskip 0pt


\section{Introduction}

The goal of this article is to count the number of non-isomorphic symplectic resolutions of a symplectic quotient singularity $V / \Gamma$; where $V$ is a finite dimensional complex vector space and $\Gamma \subset \mathrm{Sp}(V)$ a finite group. A \textit{$\Q$-factorial terminalization} of $V / \Gamma$ is a projective, crepant, birational morphism 
$$
\rho : Y \rightarrow V / \Gamma
$$
such that $Y$ has only $\Q$-factorial, terminal singularities. We say that $Y$ is a \textit{symplectic resolution} of $V / \Gamma$ if $Y$ is smooth. It is not always the case that the quotient admits a symplectic resolution, in fact such examples are relatively rare. However, it is a consequence of the minimal model program that $V/\Gamma$ always admits a $\Q$-factorial terminalization. Moreover, work of Namikawa shows that $V/\Gamma$ admits only finitely many $\Q$-factorial terminalizations up to isomorphism, and if one of these $\Q$-factorial terminalizations is actually smooth i.e. is a symplectic resolution, then all $\Q$-factorial terminalizations are smooth. 

The main result of this paper is an explicit formula for the number of $\Q$-factorial terminalizations admitted by $V / \Gamma$. Our approach is to translate the problem into a problem about the singularities of the Calogero-Moser deformation of $V / \Gamma$. Then results about the representation theory of symplectic reflection algebras can be applied to solve the problem. Namely, the centre of the symplectic reflection algebra associated to $\Gamma$ defines a flat Poisson deformation $\CM(\Gamma) \rightarrow \mf{c}$ of $V / \Gamma$. Here the base $\mf{c}$ of the Calogero-Moser deformation is the vector space of class functions supported on the symplectic reflections in $\Gamma$. Let $\mc{Y}$ be a $\Q$-factorial terminalization of $\CM(\Gamma)$ over $\mf{c}$: 
$$
\begin{tikzpicture} 
\node (C1) at (-1,1) {$\mc{Y}$};
\node (C2) at (1,1) {$\CM(\Gamma)$};
\node (C3) at (0,0) {$\mf{c}$};
\draw[->] (C1) -- (C2); 
\draw[->] (C1) -- (C3); 
\draw[->] (C2) -- (C3); 
\node [above] at (0,1) {$\boldsymbol{\rho}$};
\end{tikzpicture}
$$
The set of points $\bc$ for which the map $\boldsymbol{\rho}_{\bc} : \mc{Y}_{\bc} \rightarrow \CM_{\bc}(\Gamma)$ is an isomorphism is denoted $\mf{c}_{\reg}$, and $\mc{D} \subset \mf{c}$ the complement. In \cite{Namikawa2}, Namikawa shows that there is a finite "Weyl group" associated to any affine symplectic variety equipped with a good $\Cs$-action. In particular, we may associate to $V / \Gamma$ its Namikawa Weyl group $W$. Our main result states:

\begin{thm}\label{thm:count}
The number of pairwise non-isomorphic $\Q$-factorial terminalizations  admitted by $V / \Gamma$ equals 
\begin{equation}\label{eq:OSformula}
\frac{1}{|W|} \dim_{\C} H^*(\mf{c} \smallsetminus \mc{D}; \C).
\end{equation}
\end{thm}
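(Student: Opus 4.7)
The plan is to translate the counting problem into a chamber count for a real central hyperplane arrangement in $\mf{c}$, and then invoke the Orlik--Solomon theorem (in the form due to Zaslavsky) to pass from chambers to the Betti sum $\dim_{\C} H^*(\mf{c} \smallsetminus \mc{D}; \C)$. The identification relies on comparing the Calogero--Moser deformation $\CM(\Gamma) \to \mf{c}$ with Namikawa's universal Poisson deformation of $V/\Gamma$, and reading off the set of $\Q$-factorial terminalizations from the resulting wall-and-chamber structure.

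More precisely, I would proceed as follows. Fix a $\Q$-factorial terminalization $\widetilde{Y} \to V/\Gamma$. By Namikawa's theory, $\widetilde{Y}$ and $V/\Gamma$ admit compatible universal Poisson deformations with bases $\mf{h}$ and $\mf{h}/W$ respectively, where $W$ acts on $\mf{h}$ as a (real) reflection group. The first step is to establish a $W$-equivariant isomorphism $\mf{c} \cong \mf{h}$ under which the classifying morphism of $\CM(\Gamma) \to \mf{c}$ becomes identified with the quotient morphism $\mf{h} \to \mf{h}/W$. Granting this, pulling back the universal $\Q$-factorial terminalization from $\mf{h}/W$ recovers $\mc{Y} \to \CM(\Gamma)$, and the degeneracy locus $\mc{D} \subset \mf{c}$ becomes the pull-back of the discriminant, namely the reflection hyperplane arrangement of $W$ on $\mf{c}$.

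Once this comparison is in place the count is combinatorial. Namikawa's classification states that $\Q$-factorial terminalizations of $V/\Gamma$ are in bijection with $W$-orbits on the set of chambers of $\mf{h}_{\R}$ cut out by the movable-cone arrangement; by the previous step these are exactly the connected components of the real form $\mf{c}_{\R} \smallsetminus \mc{D}_{\R}$. Since $W$ acts freely and transitively on the chambers of its own reflection arrangement, the number of orbits equals (number of chambers)$/|W|$. Finally, the Orlik--Solomon/Zaslavsky theorem for complexified real central arrangements gives
$$
\#\{\text{chambers of } \mc{D}_{\R}\} \;=\; \dim_{\C} H^*(\mf{c} \smallsetminus \mc{D}; \C),
$$
which combined with the previous step yields formula (\ref{eq:OSformula}).

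I expect the main obstacle to lie in the geometric comparison of the second paragraph: showing that the classifying morphism $\mf{c} \to \mf{h}/W$ realises $\mf{c}$ as a copy of $\mf{h}$ under a $W$-equivariant isomorphism, and that the full degeneracy locus $\mc{D}$ coincides with the \emph{complete} reflection arrangement of $W$ rather than merely a sub-arrangement. Both Namikawa's bijection between terminalizations and chamber orbits and the Orlik--Solomon chamber-counting theorem can be quoted as black boxes; all of the substantive content lies in matching the wall-and-chamber data coming from the Calogero--Moser deformation with that coming from the universal Poisson deformation of $V/\Gamma$.
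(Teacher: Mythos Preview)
Your overall strategy is the same as the paper's: identify $\mf{c}$ with $H^2(Y;\C)$ via the comparison of the Calogero--Moser deformation with the universal Poisson deformation, then count real chambers of $\mc{D}$ and apply Zaslavsky's theorem. However, there is a genuine error in the middle of the argument.

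You assert that, under the identification $\mf{c}\simeq H^2(Y;\C)$, the locus $\mc{D}$ is ``the reflection hyperplane arrangement of $W$,'' and you plan to justify the division by $|W|$ using the fact that $W$ acts simply transitively on the chambers of its own Coxeter arrangement. This is false in general: $\mc{D}$ is a $W$-stable hyperplane arrangement that \emph{contains} the reflecting hyperplanes of $W$, but it is typically strictly larger. For the wreath products $\s_n\wr G$ the paper computes $\mc{D}$ explicitly and it is an extended Catalan-type arrangement with many more hyperplanes than the Coxeter arrangement of $W=\Z_2\times W_G$; for $Q_8\times_{\Z_2}D_8$ one gets $21$ hyperplanes in $\C^5$ while $W\simeq(\Z_2)^5$ has only $5$ reflecting hyperplanes. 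If $\mc{D}$ were the Coxeter arrangement your formula would always give exactly one terminalization.

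The correct mechanism is the one in Namikawa's main theorem together with \cite[Proposition~2.19]{BPWQuant1}: the chambers of the real arrangement $\mc{D}_{\R}$ are precisely the cones $w(\mathrm{Amp}(\rho_k))$ for $w\in W$ and $k\in K$, and this labelling is a bijection. Hence the $W$-action on chambers is \emph{free} (each chamber lies in a unique Weyl chamber because the reflecting hyperplanes are among the $H_i$) but not transitive, and the number of chambers is $|W|\cdot|K|$. Once you replace your ``$\mc{D}$ equals the Coxeter arrangement'' step with this statement, the rest of your outline (Zaslavsky, Orlik--Solomon) goes through exactly as in the paper.
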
 

A consequence of our results is that $\mc{D}$ is a union of hyperplanes in $\mf{c}$. This implies that  $H^*(\mf{c} \smallsetminus \mc{D}; \C)$ is the Orlik-Solomon algebra associated to this hyperplane arrangement. Thus, powerful results in algebraic combinatorics can be applied to explicitly calculate the number (\ref{eq:OSformula}) in examples of interest. When $V / \Gamma$ admits a symplectic resolution, $\boldsymbol{\rho}_{\bc}$ is an isomorphism if and only if $\CM_{\bc}(\Gamma)$ is smooth i.e. $\mc{D}$ is precisely the locus of singular fibers. 

There is one infinite series of groups for which it is known that the quotient $V/ \Gamma$ admits a symplectic resolution. These are the wreath product symplectic reflection groups. Let $\Gamma = \s_n \wr G$ acting on $V = \C^{2n}$; where $G$ is a finite subgroup of $SL(2,\C)$. The Weyl group associated to $G$ via the McKay correspondence is denoted $W_G$. The exponents of $W_G$ are denoted $e_1, \ds, e_{\ell}$ and $h$ denotes the Coxeter number of $W_G$. 

\begin{prop}\label{prop:countWeyl}
The number of non-isomorphic symplectic resolutions of $V / \Gamma$ equals 
\begin{equation}\label{eq:beautiful}
\prod_{i = 1}^{\ell} \frac{((n-1) h + e_i + 1)}{e_i + 1}
\end{equation}
\end{prop}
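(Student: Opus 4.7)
The strategy is to apply Theorem~\ref{thm:count} to $\Gamma = \s_n \wr G$ and make each ingredient explicit.

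First, I would describe $\mf{c}$. The symplectic reflections in $\Gamma$ split into two families: the transpositions in $\s_n$ (with trivial $G$-part), forming a single $\Gamma$-conjugacy class, and the elements $(1,\ldots,g,\ldots,1)$ for $g \in G \smallsetminus\{1\}$, whose $\Gamma$-conjugacy classes correspond bijectively to the nonidentity conjugacy classes of $G$. Via the McKay correspondence, the latter identify with the simple roots of $W_G$, so $\dim \mf{c} = 1 + \ell$ and there is a canonical decomposition $\mf{c} \iso \C \oplus \h^*$, with $\h$ the reflection representation of $W_G$.

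Next, I would identify the Namikawa Weyl group of $V/\Gamma$. The codimension-two symplectic leaves arise from (a) collisions of two points in $\C^2/G$, with $A_1$ transverse slice, and (b) one family for each nontrivial conjugacy class of $G$, with Kleinian transverse slice of the corresponding ADE type. A monodromy argument in the style of Namikawa should then give $W \cong W_G$, acting on $\mf{c}$ via the natural representation on $\h^*$ and trivially on the $\C$-factor.

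The principal step---and where I expect the main work to lie---is the explicit description of the divisor $\mc{D}$. By Theorem~\ref{thm:count} and the ensuing remarks, $\mc{D}$ is a union of hyperplanes, coinciding with the locus over which $\CM(\Gamma) \to \mf{c}$ has singular fibres. Using the smoothness criteria for Calogero-Moser spaces of wreath products established by Etingof-Ginzburg and extended by Gordon, Martino, and the author, one expects $\mc{D}$ to take the form of a central arrangement $\{\alpha(h) + k\kappa = 0\}$ for $\alpha$ a positive root of $W_G$ and $k$ running over a suitable integer range determined by $n$; the arrangement so obtained is then projectively equivalent to the extended Catalan arrangement of type $W_G$ at level $n-1$.

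Once $\mc{D}$ is identified, the computation of $\dim_\C H^*(\mf{c}\smallsetminus \mc{D}; \C)$ reduces to the characteristic polynomial of this Catalan arrangement, which by Athanasiadis's theorem equals $\chi(t) = \prod_{i=1}^\ell (t - e_i - (n-1)h)$. The Orlik-Solomon theorem then yields $|\chi(-1)| = \prod_i((n-1)h + e_i + 1)$, and dividing by $|W| = |W_G| = \prod_i(e_i + 1)$ produces the Fuss-Catalan formula~(\ref{eq:beautiful}), as required.
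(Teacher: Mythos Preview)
Your overall strategy matches the paper's, but two coupled errors slip in. First, for $n>1$ the Namikawa Weyl group is $\Z_2 \times W_G$, not $W_G$: you correctly list the collision leaf (a) with $A_1$ transverse slice, but then drop its $\Z_2$ contribution to $W=\prod_{B} W_B$. (Your description of (b) is also off---there is a \emph{single} $G$-type minimal parabolic up to conjugacy, hence one codimension-two leaf with transverse slice $\C^2/G$; the nontrivial conjugacy classes of $G$ index symplectic-reflection classes, not leaves.) Second, and for the same reason, $\mc{D}$ must contain the hyperplane $\kappa=0$ in addition to the hyperplanes $\alpha(h)+k\kappa=0$. With $\kappa=0$ present, $\mc{D}$ is the cone $c\mc{A}$ over the affine Catalan arrangement $\mc{A}$ in $\h$, and the cone relation gives $\dim_{\C} H^*(\mf{c}\smallsetminus\mc{D};\C)=2\dim_{\C} H^*(\h\smallsetminus\mc{A};\C)$; without it the arrangement is not a cone and your passage from $\dim_{\C} H^*(\mf{c}\smallsetminus\mc{D};\C)$ to $\chi_{\mc{A}}$ is unjustified.

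The two omissions cancel numerically---the missing factor $2$ in $|W|$ against the missing factor $2$ from the cone relation---which is why your final formula comes out right, but the argument as written does not establish it. The paper's proof is exactly yours with these corrections: $|W|=2|W_G|$ and $\mc{D}=c\mc{A}$, whence the count is $\dfrac{2\,|\chi_{\mc{A}}(-1)|}{2\,|W_G|}=\displaystyle\prod_{i}\dfrac{(n-1)h+e_i+1}{e_i+1}$.
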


Formula (\ref{eq:beautiful}) plays an important role in the theory of generalized Catalan combinatorics associated to Weyl groups. 

In addition to the above infinite series, it is known that there are two exceptional groups  that admit symplectic resolutions. These are $Q_8 \times_{\Z_2} D_8$ and $G_4$, both acting on a four-dimensional symplectic vector space; it seems likely that these make up all groups admitting symplectic resolutions \cite{SchUn}. In the case of $G_4$, Lehn and Sorger explicitly constructed a pair of non-isomorphic symplectic resolutions of $V / \Gamma$. Our results show that these are the only symplectic resolutions of this quotient. In the case of $Q_8 \times_{\Z_2} D_8$, a computer calculation shows that $\dim_{\C} H^*(\mf{c} \smallsetminus \mc{D}; \C) = 2592$, implying that the quotient singularity admits $81$ distinct symplectic resolutions. Recently, these $81$ symplectic resolutions have been explicitly constructed by M. Donten-Bury and J. A. Wi\'sniewski \cite{Wisniewski}. They also show that these $81$ resolutions are all possible resolution up to isomorphism.  

\subsection{Universal vs. Calogero-Moser deformations}\label{sec:uvsp}

The key to proving Theorem \ref{thm:count} is to make a precise comparison between the formally universal Poisson deformation $\mc{X}$ of $V / \Gamma$ and the Calogero-Moser deformation $\CM(\Gamma)$. As noted above, the base of the Calogero-Moser space is the space $\mf{c}$ of class functions on $\Gamma$ supported on the subset of symplectic reflections. On the other hand, Namikawa has shown that the base of the universal deformation $\mc{X}$ is $H^2(Y;\C) / W$. Thus, there exists a morphism $\mf{c} \rightarrow H^2(Y;\C) / W$ such that 
$$
\CM(\Gamma) \simeq \mc{X} \times_{H^2(Y;\C) / W} \mf{c}.
$$
Our main result, Theorem \ref{thm:main}, is an explicit description of the morphism $\mf{c} \rightarrow H^2(Y;\C) / W$. In order to precisely state our results, we introduce some additional notation. 

A subgroup $\uGamma$ of $\Gamma$ is \textit{parabolic} if it is the stabilizer of some vector $v \in V$. The rank of $\uGamma$ is defined to be $\frac{1}{2} \left(\dim V - \dim V^{\uGamma} \right)$, and we say that $\uGamma$ is \textit{minimal} if it has rank one. In this case $\uGamma$ is isomorphic to a finite subgroup of $SL(2,\C)$. The set of $\Gamma$-conjugacy classes of minimal parabolic subgroups is denoted $\mc{B}$. The variety $V / \Gamma$ is stratified by finitely many symplectic leaves and those leaves $\mc{L}$ whose dimension is $\dim V - 2$ are naturally labeled by the elements of $\mc{B}$. For each $B \in \mc{B}$, we fix a representative $\uGamma$ in $B$ and write $\widetilde{\Xi}(B)$ for the normalizer of $\uGamma$ in $\Gamma$. The quotient $\widetilde{\Xi}(B) / \uGamma$ is denoted $\Xi(B)$. Via the McKay correspondence, there is associated to $\uGamma \subset SL(2,\C)$ a Weyl group $(W(B),\h_B)$, of simply laced type. As explained in section \ref{sec:McKay}, there is a natural linear action of $\Xi(B)$ on $\h_B$. We fix $\mf{a}_B := (\h^*_B)^{\Xi(B)}$. The centralizer $W_B$ of $\Xi(B)$ in $W(B)$ acts on $\mf{a}_B$. We fix a $\Q$-factorial terminalization $\rho : Y \rightarrow V / \Gamma$ of $V / \Gamma$.

\begin{thm}\label{prop:Weylgroup}
The Namikawa Weyl group associated to $V/\Gamma$ is $W := \prod_{B \in \mc{B}} W_B$ acting on 
$$
H^2(Y;\C) \simeq \prod_{B \in \mc{B}} \mf{a}_B.
$$ 
\end{thm}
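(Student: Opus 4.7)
The plan is to match Namikawa's general construction of the Weyl group of an affine conic symplectic variety against the combinatorial data $(W_B,\mf{a}_B)_{B\in\mc{B}}$ coming from the symplectic quotient. Recall that Namikawa's procedure attaches, to each codimension-two symplectic leaf $\mc{L}$ of the singular variety, an ADE root system arising from a transverse Kleinian singularity together with a monodromy action of $\pi_1(\mc{L})$ on its Dynkin diagram; the local Weyl group $W_{\mc{L}}$ is then the centralizer of the monodromy image in the ambient Weyl group, acting on the monodromy-fixed Cartan. The full Namikawa Weyl group is $\prod_{\mc{L}} W_{\mc{L}}$, acting on $H^2(Y;\C)$, which decomposes as the product of the fixed Cartans. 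I would verify these ingredients leaf by leaf.

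First, as already recorded in the introduction, the codimension-two leaves of $V/\Gamma$ are parametrised by $\mc{B}$: the leaf $\mc{L}_B$ attached to $B\in\mc{B}$ is the image in $V/\Gamma$ of the stratum $\{v\in V:\Gamma_v = \uGamma\}$, where $\uGamma$ is a representative of $B$. Next, by the symplectic Luna slice theorem applied at a point $v$ of this stratum, the formal neighbourhood of its image in $V/\Gamma$ splits as a Poisson product $\mc{L}_B\times \C^2/\uGamma$, since the symplectic complement to $V^{\uGamma}$ in $V$ is a two-dimensional symplectic $\uGamma$-module. Thus the transverse slice is the Kleinian singularity $\C^2/\uGamma$, whose associated root system is exactly the one attached to $\uGamma$ via the McKay correspondence, with Cartan $\h_B$.

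The crucial identification is that of the monodromy. The leaf $\mc{L}_B$ can be described as the image in $V/\Gamma$ of the open subset $(V^{\uGamma})^{\reg}\subset V^{\uGamma}$ on which the isotropy is exactly $\uGamma$, taken modulo the normaliser $\widetilde{\Xi}(B)$. Since $\uGamma$ acts trivially on $V^{\uGamma}$, the deck group of this quotient is $\widetilde{\Xi}(B)/\uGamma = \Xi(B)$; and $\widetilde{\Xi}(B)$ acts on the symplectic complement $\C^2$ through the same quotient $\Xi(B)$. The induced permutation of the exceptional curves in the minimal resolution of the transverse slice $\C^2/\uGamma$ is therefore precisely the $\Xi(B)$-action on the McKay graph described in the paper's section on the McKay correspondence. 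Consequently the image of the monodromy representation of $\pi_1(\mc{L}_B)$ in $\mathrm{Aut}(\h_B)$ coincides with the image of $\Xi(B)$. Feeding this into Namikawa's recipe produces the fixed space $(\h_B^*)^{\Xi(B)} = \mf{a}_B$ and the centralizer group $\mathrm{Cent}_{W(B)}(\Xi(B)) = W_B$; taking the product over $B\in\mc{B}$ yields both the decomposition of $H^2(Y;\C)$ and the asserted form of $W$.

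The principal obstacle is the third step: carefully comparing the geometric monodromy of $\pi_1(\mc{L}_B)$ on the minimal resolution of the transverse slice with the combinatorial $\Xi(B)$-action on the McKay graph. This demands a hands-on local analysis of how $\widetilde{\Xi}(B)$ acts on the Luna slice and on the exceptional components of its resolution, followed by a matching with the McKay correspondence as set up in the paper. A secondary concern is checking that no additional contribution to $H^2(Y;\C)$ can appear beyond the codimension-two leaves, which should follow from Namikawa's general theorem that $H^2$ of a $\Q$-factorial terminalization of an affine conic symplectic variety is controlled by its codimension-two stratification.
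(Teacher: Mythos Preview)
Your plan is essentially the paper's own: feed the local data at each codimension-two leaf into Namikawa's recipe and match the monodromy with the $\Xi(B)$-action coming from McKay. The paper carries this out by proving $\pi_1(\mc{L}_B)=\Xi(B)$ exactly (the complement of $U$ in $V^{\uGamma}$ has complex codimension $\ge 2$, so $U$ is simply connected), then establishing the decomposition $H^2(Y;\C)\simeq\bigoplus_B H^2(Y_B;\C)^{\Xi(B)}$ by an explicit Mayer--Vietoris argument over the open cover $\{Y(B)\}$, together with an \'etale $\Xi(B)$-cover $Z\simeq U\times Y_B\to Y(B)$ and K\"unneth. Your two ``obstacles'' are thus precisely the lemmas the paper supplies; you are not missing any idea, only the execution.

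One genuine slip to fix: you write that $\widetilde{\Xi}(B)$ acts on the symplectic complement $\C^2$ \emph{through the quotient} $\Xi(B)$. That is false: $\uGamma\subset\widetilde{\Xi}(B)$ acts faithfully on $V_0\cong\C^2$, so the action does not factor. What is true, and what you need, is that $\widetilde{\Xi}(B)$ acts on $V_0$, hence $\Xi(B)=\widetilde{\Xi}(B)/\uGamma$ acts on the quotient $V_0/\uGamma$ and on its minimal resolution $Y_B$; it is this induced $\Xi(B)$-action on $H^2(Y_B;\C)\cong\h_B^*$ that realises the monodromy. Relatedly, your deck-group sentence only gives a surjection $\pi_1(\mc{L}_B)\twoheadrightarrow\Xi(B)$; to conclude that the monodromy image is exactly $\Xi(B)$ you must either show $U=(V^{\uGamma})^{\reg}$ is simply connected (as the paper does) or argue that the resolved family is trivial over $U$ so the kernel acts trivially.
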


As noted above, the Calogero-Moser deformation plays a key role in our results. Associated to the pair $(V,\Gamma)$ is the \textit{symplectic reflection algebra} $\mathbf{H}(\Gamma)$ at $t = 0$, as introduced by Etingof and Ginzburg \cite{EG}. This is a non-commutative $\C[\mf{c}]$-algebra, free over $\C[\mf{c}]$, such that the quotient $\mathbf{H}(\Gamma) / \langle \C[\mf{c}]_+ \rangle$ is isomorphic to the skew-group algebra $\C[V] \rtimes \Gamma$. Let $e$ denote the trivial idempotent in $\C \Gamma$, so that $e(\C[V] \rtimes \Gamma)e \simeq \C[V]^{\Gamma}$. The algebra $e \mathbf{H}(\Gamma) e$ is a commutative Poisson algebra, again free over $\C[\mf{c}]$, such that 
$$
e \mathbf{H}(\Gamma) e / \langle \C[\mf{c}]_+ \rangle \simeq \C[V]^{\Gamma},
$$
as Poisson algebras. Thus, $\vartheta : \CM(\Gamma) := \Spec e \mathbf{H} e \rightarrow \mf{c}$ is a flat Poisson deformation of $V / \Gamma$. We call $\CM(\Gamma)$ the \textit{Calogero-Moser deformation} of $V / \Gamma$. The key result at the heart of this paper is the following theorem, which makes explicit the relation between the deformations $\mc{X}$ and $\CM(\Gamma)$ of $V / \Gamma$. 

\begin{thm}\label{thm:main}
The McKay correspondence defines a $W$-equivariant isomorphism $\mf{c} \simeq H^2(Y;\C)$ such that the Calogero-Moser deformation $\CM(\Gamma)  \rightarrow \mf{c}$ is isomorphic to the pull-back along the quotient map $H^2(Y;\C) \rightarrow H^2(Y;\C) / W$ of the formally universal Poisson deformation $\mc{X} \rightarrow H^2(Y;\C) / W$.  
\end{thm}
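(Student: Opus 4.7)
The proof naturally splits into two parts: constructing the $W$-equivariant linear isomorphism $\mf{c} \simeq H^2(Y;\C)$, and then identifying the classifying morphism of the Calogero-Moser family with the quotient map $H^2(Y;\C) \to H^2(Y;\C)/W$ under this identification.

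For the first part, I would decompose $\mf{c}$ as $\bigoplus_{B \in \mc{B}} \mf{c}_B$, where $\mf{c}_B$ consists of the $\widetilde{\Xi}(B)$-invariant class functions on $\uGamma \setminus \{1\}$. Every symplectic reflection $s \in \Gamma$ lies in a (unique up to conjugacy) minimal parabolic subgroup, namely the pointwise stabilizer of a generic point of $V^s$, and this organises the conjugacy classes of symplectic reflections into the summands indexed by $\mc{B}$. Since $\uGamma$ is a finite subgroup of $SL(2,\C)$, the McKay correspondence identifies class functions on $\uGamma \setminus \{1\}$ equivariantly for $\Xi(B)$ (acting by Dynkin graph automorphisms) with the dual Cartan $\h_B^*$ of the corresponding ADE Weyl group $W(B)$; taking $\Xi(B)$-invariants yields $\mf{c}_B \simeq \mf{a}_B$. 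Combined with Theorem \ref{prop:Weylgroup}, this gives the desired $W$-equivariant linear isomorphism.

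For the second part, Namikawa's universality of $\mc{X}$ provides a unique classifying map $\psi : \mf{c} \to H^2(Y;\C)/W$ realising $\CM(\Gamma)$ as the pullback of $\mc{X}$. The conical $\Cs$-actions on $V/\Gamma$ lift to compatible $\Cs$-actions on both deformations, forcing $\psi$ to be $\Cs$-equivariant and hence (together with $\psi(0)=0$) induced by a linear map $\tilde{\psi} : \mf{c} \to H^2(Y;\C)$, canonical up to the $W$-action. To pin down $\tilde{\psi}$ I would restrict both deformations to the formal slice at a generic point of the codimension-two symplectic leaf $\mc{L}_B$, where $V/\Gamma$ is analytically a smooth factor times the Kleinian singularity $\C^2/\uGamma$, with $\Xi(B)$ acting on the Kleinian factor. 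By Namikawa's restriction theorem for universal Poisson deformations, the pull-back of $\mc{X}$ to this slice is (the $\Xi(B)$-invariant part of) the Brieskorn-Slodowy semiuniversal deformation with base $\h_B^*$. On the other side, the restriction theorem for symplectic reflection algebras (Bezrukavnikov-Etingof, Losev) identifies the pull-back of $\CM(\Gamma)$ along $\mf{c}_B \hookrightarrow \mf{c}$ with the Calogero-Moser deformation of the Kleinian group $\uGamma$; by the results of Etingof-Ginzburg and Crawley-Boevey-Holland this latter deformation is precisely the Brieskorn-Slodowy family, with base $\mf{c}_B$ identified with $\h_B^*$ via McKay. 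This identifies $\tilde{\psi}|_{\mf{c}_B}$ with the McKay isomorphism onto $\mf{a}_B$, and ranging over all $B \in \mc{B}$ finishes the proof.

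The main obstacle lies in controlling the transverse slice restriction compatibly with the ambient $\Xi(B)$-action: one must verify that the $\Xi(B)$-equivariant semiuniversal deformation of $\C^2/\uGamma$ descends to the universal Poisson deformation of $(\C^2/\uGamma)/\Xi(B)$ with base $\mf{a}_B$ and Weyl group $W_B$, and that the Calogero-Moser restriction theorem is compatible with these $\Xi(B)$-invariants. Once these equivariance checks are in place, the rigidity forced by $\Cs$-equivariance together with the slice-wise identification of both families with the Brieskorn-Slodowy deformation determines $\tilde{\psi}$ globally as the linear McKay isomorphism.
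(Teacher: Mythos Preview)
Your proposal is correct and follows essentially the same strategy as the paper: construct the McKay isomorphism $\varpi:\mf{c}\iso\bigoplus_B\mf{a}_B$ summand by summand, obtain the classifying map $\alpha:\mf{c}\to H^2(Y;\C)/W$ from Namikawa's universality and $\Cs$-equivariance, and then identify $\alpha$ with the quotient map by completing at a point of each codimension-two leaf $\mc{L}_B$, where Losev's completion theorem on the Calogero--Moser side and Namikawa's local description on the universal side both reduce to the rank-one Kleinian case. The only notable difference is bookkeeping: the paper avoids asserting that $\alpha$ lifts to a linear map $\tilde\psi:\mf{c}\to H^2(Y;\C)$ and instead compares the two formal maps $\hat\alpha,\hat\beta:\widehat{\mf{c}}\to (H^2(Y;\C)/W)^{\wedge}$ directly via a diagram of Cartesian squares, and for the rank-one identification it cites Losev rather than Crawley-Boevey--Holland.
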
 

In particular, Theorem \ref{thm:main} implies that Conjecture 1.9 of \cite{GK} is true. Results of Namikawa \cite{Namikawa3} on the birational geometry of $Y$ show that the number of $\Q$-factorial terminalizations of $V / \Gamma$ can be computed by counting the number of connected components in the complement to a (finite) real hyperplane arrangement in $H^2(Y;\R)$. Theorem \ref{thm:main} allows us to identify the complexification of this hyperplane arrangement with the set $\mc{D}$. Then Theorem \ref{thm:count} can be deduced from Theorem \ref{thm:main} using standard results from the theory of hyperplane arrangements.  We note an immediate corollary of Theorem \ref{thm:main}. 

\begin{cor}
Let $\Gamma_0$ be the normal subgroup of $\Gamma$ generated by all symplectic reflections. Then the number of $\Q$-factorial terminalizations of $V / \Gamma$ equals the number of $\Q$-factorial terminalizations admitted by $V / \Gamma_0$. 
\end{cor}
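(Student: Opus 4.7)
The plan is to invoke Theorem~\ref{thm:count} for both $V/\Gamma$ and $V/\Gamma_0$ and show that the two resulting expressions agree. By definition of $\Gamma_0$, the symplectic reflections of $\Gamma$ coincide with those of $\Gamma_0$ as subsets of $\mathrm{Sp}(V)$, and therefore every minimal parabolic subgroup of $\Gamma$ is a minimal parabolic subgroup of $\Gamma_0$, and conversely. The conjugation action of $\Gamma/\Gamma_0$ on the set $\mc{B}_0$ of $\Gamma_0$-conjugacy classes of minimal parabolics has orbit set $\mc{B}$, and restriction of class functions induces a canonical identification $\mf{c} = (\mf{c}_0)^{\Gamma/\Gamma_0}$ of Calogero--Moser bases, since a class function on $\Gamma$ supported on symplectic reflections is the same thing as a $\Gamma/\Gamma_0$-invariant class function on $\Gamma_0$ supported on symplectic reflections.

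Using Theorem~\ref{prop:Weylgroup} together with the normalizer identity $\widetilde{\Xi}_{\Gamma_0}(B) = \widetilde{\Xi}_\Gamma(B) \cap \Gamma_0$, I would upgrade this to a $\Gamma/\Gamma_0$-equivariant identification of Namikawa data: the induced $\Gamma/\Gamma_0$-action on $H^2(Y_0;\C) \simeq \prod_{B' \in \mc{B}_0} \mf{a}_{B'}$ has fixed subspace $H^2(Y;\C) \simeq \prod_{B \in \mc{B}} \mf{a}_B$, and $W$ is recovered as the corresponding fixed subgroup of $W_0$. Applying Theorem~\ref{thm:main} to both $\CM(\Gamma)$ and $\CM(\Gamma_0)$ then shows that $\mc{D}_0 \subset \mf{c}_0$ is a $\Gamma/\Gamma_0$-stable hyperplane arrangement whose restriction to $\mf{c}$ is precisely $\mc{D}$.

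It remains to compare the two counts $\frac{1}{|W|} \dim_{\C} H^*(\mf{c} \setminus \mc{D}; \C)$ and $\frac{1}{|W_0|} \dim_{\C} H^*(\mf{c}_0 \setminus \mc{D}_0; \C)$ produced by Theorem~\ref{thm:count}. The main technical obstacle is precisely this Orlik--Solomon comparison. The plan here is to exploit the product structure provided by Theorem~\ref{prop:Weylgroup}: the arrangement $\mc{D}_0$ decomposes according to the $\Gamma/\Gamma_0$-orbits on $\mc{B}_0$, and within each orbit the factors $\mf{a}_{B'}$ are permuted transitively by $\Gamma/\Gamma_0$ while the local reflection arrangements on them are conjugate. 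A Zaslavsky-style chamber count combined with the product formula for Orlik--Solomon algebras reduces the claim, orbit by orbit, to the statement that for a reflection arrangement in a product $\mf{a}^k$ acted on by a reflection group $W'^k$ together with a transitive permutation of the factors, the quotient of the Orlik--Solomon dimension by the semidirect product order equals the analogous quotient on the diagonal $\mf{a}$; this diagonal reduction is the step I expect to require the most care.
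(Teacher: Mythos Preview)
Your proposal contains a genuine gap in the final Orlik--Solomon comparison. You assert that ``the arrangement $\mc{D}_0$ decomposes according to the $\Gamma/\Gamma_0$-orbits on $\mc{B}_0$'' and then plan to apply a product formula. But the discriminant arrangement does \emph{not} in general respect the product decomposition $\mf{c} \simeq \prod_B \mf{a}_B$ of Theorem~\ref{prop:Weylgroup}. The wreath product computation in Section~\ref{sec:wreath} already shows this: there $|\mc{B}| = 2$, $\mf{c} \simeq \C \times \h$, and the hyperplanes $H_{\lambda,m} = \{(\alpha,x) : \lambda(x) + m\alpha = 0\}$ for $m \neq 0$ are not of the form (hyperplane in one factor) $\times$ (other factor). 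Since $\s_n \wr G$ is already generated by symplectic reflections, this example has $\Gamma = \Gamma_0$; but the same arrangement $\mc{D}_0$ appears as soon as you take any $\Gamma$ with $\Gamma_0 = \s_n \wr G$, and your product-over-orbits decomposition fails for it. Without that decomposition, the ``diagonal reduction'' you sketch has no starting point, and the equality $\frac{1}{|W|}\dim H^*(\mf{c}\smallsetminus\mc{D}) = \frac{1}{|W_0|}\dim H^*(\mf{c}_0\smallsetminus\mc{D}_0)$ remains unproven.

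Steps 5 and 6 are also stated more loosely than they can bear. The claim that $W$ is ``the corresponding fixed subgroup of $W_0$'' needs to be made precise: for a single minimal parabolic $\uGamma$ one has $\Xi_0(B') \subset \Xi(B)$, hence $W_B \subset W^0_{B'}$, but the passage from this to a global statement relating $W$ and $W_0$ (and compatible with the identification $\mf{c} = \mf{c}_0^{\Gamma/\Gamma_0}$) is not automatic. Likewise, the assertion $\mc{D} = \mc{D}_0 \cap \mf{c}$ requires comparing the terminalizations $\mc{Y}$ and $\mc{Y}_0$, not just the Calogero--Moser spaces. For reference, the paper records the corollary as an immediate consequence of Theorem~\ref{thm:main} and gives no argument; whatever the author has in mind, it is not the elaborate chamber-counting comparison you propose.
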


Thus, if $\Gamma_0 = \{ 1 \}$, then $V / \Gamma$ is the unique $\Q$-factorial terminalization of $V / \Gamma$. Let $\mc{Y}$ be the formally universal Poisson deformation of the terminialization $Y$. Then $\mc{Y}$ is projective over its affinization $\mc{Y}^{\mathrm{aff}} := \Spec \Gamma(\mc{Y},\mc{O})$. 

\begin{cor}\label{cor:affinization}
 Then there exists an isomorphism of Poisson $H^2(Y;\C)$-schemes $\mc{Y}^{\mathrm{aff}} \simeq \CM(\Gamma)$. 
\end{cor}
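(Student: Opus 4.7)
The plan is to reduce the corollary to Theorem \ref{thm:main} via the standard comparison between the universal Poisson deformations of $Y$ and of its affinization $V/\Gamma$. The strategy has three steps: first verify that $\mc{Y}^{\aff}\to H^2(Y;\C)$ is a flat Poisson deformation of $V/\Gamma$; second, identify the classifying map of this family as the quotient morphism $H^2(Y;\C)\to H^2(Y;\C)/W$; third, apply Theorem \ref{thm:main}.

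For the first step, since $Y$ has symplectic (hence rational) singularities and $\rho\colon Y\to V/\Gamma$ is a $\Q$-factorial terminalization, $\rho_*\mc{O}_Y=\mc{O}_{V/\Gamma}$, so $\Gamma(Y,\mc{O}_Y)=\C[V]^{\Gamma}$ and the central fibre of $\mc{Y}^{\aff}$ is $V/\Gamma$. Flatness of $\mc{Y}^{\aff}$ over $H^2(Y;\C)$ is a consequence of the vanishing of $R^i(\mc{Y}\to\mc{Y}^{\aff})_*\mc{O}_{\mc{Y}}$ for $i>0$, a standard fact for symplectic resolutions deformed over an affine base; the Poisson bracket on $\Gamma(\mc{Y},\mc{O}_{\mc{Y}})$ descends automatically. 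For the second step, Namikawa's description of the universal Poisson deformation of an affine symplectic singularity says precisely that if one starts with the formally universal Poisson deformation $\mc{Y}\to H^2(Y;\C)$ of $Y$, takes global sections fibrewise, and then quotients the base by the Namikawa Weyl group $W$ identified in Theorem \ref{prop:Weylgroup}, the resulting family on $H^2(Y;\C)/W$ is the formally universal Poisson deformation $\mc{X}$ of $V/\Gamma$. Equivalently, there is a canonical isomorphism
\[
\mc{Y}^{\aff}\;\simeq\; \mc{X}\times_{H^2(Y;\C)/W} H^2(Y;\C)
\]
of Poisson $H^2(Y;\C)$-schemes.

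For the third step one simply invokes Theorem \ref{thm:main}: the McKay isomorphism $\mf{c}\iso H^2(Y;\C)$ is $W$-equivariant, and along it $\CM(\Gamma)\to\mf{c}$ is identified with $\mc{X}\times_{H^2(Y;\C)/W}H^2(Y;\C)\to H^2(Y;\C)$. Combining the two identifications yields the desired Poisson $H^2(Y;\C)$-isomorphism $\mc{Y}^{\aff}\simeq\CM(\Gamma)$.

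The main obstacle is the second step, i.e.\ the Namikawa-type comparison that identifies $\mc{Y}^{\aff}$ with the $W$-cover of $\mc{X}$. One has to check that the affinization construction commutes with base change over $H^2(Y;\C)$ and that the resulting deformation of $V/\Gamma$ has the correct universal property, both of which rely on cohomological vanishing in families and on the fact, guaranteed by Theorem \ref{prop:Weylgroup}, that $W$ is genuinely the Namikawa Weyl group acting on $H^2(Y;\C)$. Once this input is granted, the corollary is an immediate consequence of Theorem \ref{thm:main}.
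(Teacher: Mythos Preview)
Your proposal is correct and matches the paper's approach: the paper states Corollary~\ref{cor:affinization} as an immediate consequence of Theorem~\ref{thm:main} without giving a separate proof, and your three-step argument is precisely the natural way to unpack that claim, invoking Namikawa's identification of $\mc{Y}^{\aff}$ with the pullback $\mc{X}\times_{H^2(Y;\C)/W}H^2(Y;\C)$ (implicit in the commutative diagram~(\ref{eq:commdiNam}) and the results of \cite{Namikawa2}) and then applying Theorem~\ref{thm:main}.
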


The birational geometry of $\Q$-factorial terminalizations of $V / \Gamma$ can also be used to deduce results about the Calogero-Moser deformation. Namely, the following is a partial answer to Question 9.8.4 by Bonnaf\'e and Rouquier \cite{BonnafeRouquier}. 

\begin{cor}\label{cor:singCal}
Let $\mc{D}' \subset \mf{c}$ be the locus over which the fibers of the Calogero-Moser deformation  $\CM(\Gamma)$ are singular. Then $\mc{D}'$ is either a finite union of hyperplanes, or the whole of $\mf{c}$. 
\end{cor}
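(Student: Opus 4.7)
The plan is to pull the question back, via Theorem~\ref{thm:main}, to one about singular fibers of the formally universal Poisson deformation $\mc{X} \to H^2(Y;\C)/W$, and then to split into two cases depending on whether $V/\Gamma$ admits a symplectic resolution. One inclusion is immediate in general: $\mc{D} \subseteq \mc{D}'$, because whenever $\boldsymbol{\rho}_{\bc}$ fails to be an isomorphism it is a nontrivial projective crepant birational morphism onto $\CM_{\bc}(\Gamma)$, forcing its target to be singular.

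In the first case, where $V/\Gamma$ admits a symplectic resolution, the remark following Theorem~\ref{thm:count} already identifies $\mc{D}'$ with $\mc{D}$, and the preceding text observes that $\mc{D}$ is a finite union of hyperplanes. So $\mc{D}'$ is a finite union of hyperplanes, as required.

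In the second case, where $V/\Gamma$ does not admit a symplectic resolution, the goal is to show $\mc{D}' = \mf{c}$. By a theorem of Namikawa (the criterion that $V/\Gamma$ admits a symplectic resolution if and only if a generic Poisson deformation of $V/\Gamma$ is smooth), the generic fiber of $\mc{X} \to H^2(Y;\C)/W$ is singular. Since smoothness on fibers is an open condition in a flat family, the open subset of $H^2(Y;\C)/W$ over which the fibers of $\mc{X}$ are smooth is empty, so every fiber of $\mc{X}$ is singular. Theorem~\ref{thm:main} identifies $\CM(\Gamma) \to \mf{c}$ with the pull-back of $\mc{X}$ along the quotient map $\mf{c} = H^2(Y;\C) \to H^2(Y;\C)/W$, so each fiber of $\CM(\Gamma)$ is a fiber of $\mc{X}$ and hence singular, giving $\mc{D}' = \mf{c}$.

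The substance of the proof lies entirely in Theorem~\ref{thm:main} combined with Namikawa's characterization of resolvability via Poisson deformations; granting these, the corollary is essentially formal, and the only technical point is the routine openness of fiberwise smoothness in a flat family.
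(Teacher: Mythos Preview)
Your argument is correct and follows essentially the same two-case split as the paper's proof: in the resolution case you use Theorem~\ref{thm:main} together with the fact that $\mc{D}$ is a hyperplane arrangement (this is the content of the main theorem of \cite{Namikawa3}), and in the non-resolution case you invoke the criterion that $V/\Gamma$ admits a symplectic resolution if and only if a generic Poisson deformation is smooth. The only cosmetic difference is that for the second case the paper cites \cite[Corollary~1.21]{GK} directly for $\CM_{\bc}(\Gamma)$, whereas you route the same conclusion through the universal deformation $\mc{X}$ and pull back along Theorem~\ref{thm:main}; either way the substance is identical.
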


\subsection{Outline of paper}

In section \ref{sec:proof2}, we give a proof of Theorem \ref{prop:Weylgroup}. Section \ref{sec:CMdef} is devoted to the proof of Theorem \ref{thm:main}. The proof of Corollary \ref{cor:singCal} is given in section \ref{sec:BR}. Then, our main result Theorem \ref{thm:count} is proven in section \ref{sec:proof1}. Finally, we consider specific examples in sections \ref{sec:wreath} and \ref{sec:except}, where formula (\ref{eq:beautiful}) of Proposition \ref{prop:countWeyl} is derived.

\begin{remark}
Throughout, the cohomology group $H^i(Y;\C)$ stands for the singular cohomology of underlying reduced variety, equipped with the \textit{analytic} topology. 
\end{remark}   

\section{Namikawa's Weyl group}\label{sec:proof2}

In this section, we describe Namikawa's Weyl group associated to $V / \Gamma$, thus confirming Theorem \ref{prop:Weylgroup}.

\subsection{}\label{sec:McKay} The symplectic leaves in $V / \Gamma$ are labeled by $\Gamma$-conjugacy classes of parabolic subgroups of $\Gamma$. Let $\underline{\Gamma}$ be a parabolic subgroup. Then the leaf $\mc{L}$ labeled by $\underline{\Gamma}$ is the image under $\pi : V \rightarrow V / \Gamma$ of the set $\{ v \in V \ |\ \Gamma_v = \underline{\Gamma} \}$. If $( V / \Gamma)_{\le 1}$ is the open subset consisting of the open symplectic leaf and all leaves $\mc{L}$ of dimension $\dim V - 2$, then we write $\Yle := \rho^{-1}(( V / \Gamma)_{\le 1})$. The open subset $\Yle$ is contained in the smooth locus of $Y$. 

As in section \ref{sec:uvsp}, we fix $B \in \mc{B}$, $\uGamma$ the corresponding minimal parabolic in $\Gamma$ etc. Let $V_0$ denote the complementary $\uGamma$-module to $V^{\uGamma}$ in $V$; $V_0$ is a two-dimensional symplectic subspace. The open subset of $V^{\uGamma}$ consisting of all points whose stabilizer under $\Gamma$ equals $\uGamma$ is denoted $U$. The group $\Xi(B)$ acts freely on $U \times V_0 / \uGamma$ and the quotient map $\pi$ induces a Galois covering $\sigma : U \times V_0 / \uGamma \rightarrow V / \Gamma$ onto its image, with Galois group $\Xi(B)$.

We choose $b \in U$ and set $p = \pi(b)$. Then $\pi(\{ b \} \times V_0) \simeq V_0 / \uGamma$ is a closed subvariety of $V / \Gamma$. Let $Y_B = \rho^{-1}(V_0 / \uGamma)$ in $Y$, so that $Y_B \subset \Yle$ and $\rho : Y_B \rightarrow V_0 / \uGamma$ is a minimal resolution of singularities. Let $F$ be the exceptional locus of this minimal resolution and $\Irr (F)$ the set of exceptional divisors. Recall that $W(B)$ is the Weyl group associated to $\uGamma$. Let $\Delta_B \subset \h_B^*$ be a set of simple roots. The set of isomorphism classes of \textit{non-trivial} irreducible $\uGamma$-modules is denoted $\Irr (\uGamma)$. By \cite[Theorem 2.2 (i)]{GonzalezSprinbergVerdier}, the McKay correspondence is the pair of bijections
\begin{equation}\label{eq:mckaybij}
\Delta_B \iso \Irr (\uGamma) \iso \Irr (F), \quad \alpha \mapsto \rho(\alpha) \mapsto D_{\rho(\alpha)},
\end{equation}
uniquely defined by the condition 
\begin{equation}\label{eq:pairings}
(D_{\rho(\alpha)},D_{\rho(\beta)}) = \dim \Hom_{\uGamma}( V_0 \o \rho(\alpha), \rho(\beta)) =  - \langle \alpha, \beta \rangle,
\end{equation}
where $( - , - )$ is the intersection pairing and $\langle - , - \rangle$ the Killing form. There is a natural representation theoretic action of $\Xi(B)$ on the set $\Irr (\uGamma) $. For $\lambda \in \Irr (\uGamma)$ and $x \in \Xi(B)$, we have $x \cdot \lambda = {}^x \lambda$, where ${}^x \lambda$ is the $\uGamma$-module, which as a vector space equals $\lambda$, with action $g \cdot v = xgx^{-1} v$ for all $v \in \lambda$. The identity (\ref{eq:pairings}) implies that the induced action of $\Xi(B)$ on $\Delta_B$ is via Dynkin diagram automorphism. 

\subsection{} The group $\Xi(B)$ also acts naturally on $H^2(Y_B; \C)$ as follows. Since the decomposition $V = V^{\uGamma} \oplus V_0$ is as $\widetilde{\Xi}(B)$-modules, $\Xi(B)$ acts on $V_0 / \uGamma \subset V / \uGamma$. There is a unique lift of this action to the resolution $Y_B$, as can be seen from the explicit construction of $Y_B$ as $\mathrm{Hilb}^{\uGamma}(\C^2)$, the dominant component of $\uGamma$-$\mathrm{Hilb}(\C^2)$; see \cite{CBKleinian}. Thus, there is an induced action of $\Xi(B)$ on $H^2(Y_B;\C)$. 

Recall that each divisor $D \in \Irr (F)$ is a rational curve with self-intersection $-2$. For $D \in \Irr (F)$, let $L_D$ denote the corresponding line bundle on $Y_B$ such that $L_D |_D \simeq \mc{O}_D(-1)$ and $L_D |_{D'} = \mc{O}_{D'}$ for $D' \neq D$. The following is a well-known part of the McKay correspondence, but we sketch a proof since we were unable to find a suitable reference. 

\begin{lem}\label{lem:twodimHtwo}
For $\mc{L} \in \Pic(Y_B)$, let $c_1(\mc{L})$ denote its Chern character in $H^2(Y_B;\C)$. 
\begin{enumerate}
\item The Chern characters $c_1(L_D)$, for $D \in \Irr (F)$, are a basis of $H^2(Y_B;\C)$. 
\item The induced isomorphism 
$$
\h^*_B \iso H^2(Y_B;\C), \quad \alpha \mapsto c_1 \left( L_{D_{\rho(\alpha)}} \right)
$$
is $\Xi(B)$-equivariant. 
\end{enumerate}
\end{lem}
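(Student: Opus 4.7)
The plan is to establish part (1) using the topology of the minimal resolution of a Kleinian singularity together with the non-degeneracy of the Cartan form on exceptional curves, and to deduce part (2) from the intersection-theoretic characterisation of the $L_D$ combined with the $\Xi(B)$-equivariance of the McKay bijections in (\ref{eq:mckaybij}).

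For part (1), I would first recall that $V_0 / \uGamma$ is a rational double point, so $Y_B$ deformation retracts onto its exceptional fibre $F$. This gives $H^2(Y_B;\C) \simeq H^2(F;\C)$, of dimension $|\Irr (F)|$. The intersection form on the span of the classes $[D]$ in $H_2(Y_B;\C)$ is minus the Cartan matrix of the corresponding simply-laced Dynkin diagram, and is in particular non-degenerate. The defining property $L_D|_D \simeq \mc{O}_D(-1)$ and $L_D|_{D'} \simeq \mc{O}_{D'}$ for $D' \neq D$ translates to $(c_1(L_D),[D']) = -\delta_{D,D'}$, so the $c_1(L_D)$ form a basis dual (up to sign) to $\{[D]\}$, and therefore a basis of $H^2(Y_B;\C)$.

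For part (2), the lifted $\Xi(B)$-action on $Y_B$ preserves $F$ and thereby permutes $\Irr (F)$; write $x \cdot D$ for this action. Both bijections in (\ref{eq:mckaybij}) should be $\Xi(B)$-equivariant: the first is recorded immediately after (\ref{eq:pairings}), and the second follows because the assignment $\lambda \mapsto D_{\lambda}$ via $\mathrm{Hilb}^{\uGamma}(\C^2)$ is natural in the pair $(\uGamma, \lambda)$ under conjugation by the normaliser. Granting this, I would compute for $x \in \Xi(B)$ that
$$
(c_1(x^* L_D), [D']) = (c_1(L_D), x_*[D']) = -\delta_{D, x \cdot D'} = -\delta_{x^{-1} \cdot D, D'},
$$
so the uniqueness afforded by part (1) forces $x^* L_D \simeq L_{x^{-1} \cdot D}$. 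Taking Chern classes then yields $\Xi(B)$-equivariance of the map $\alpha \mapsto c_1(L_{D_{\rho(\alpha)}})$.

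The main obstacle I anticipate is rigorously justifying the $\Xi(B)$-equivariance of the second bijection in (\ref{eq:mckaybij}). While it is characterised by the intersection-Hom identity (\ref{eq:pairings}), verifying that a $Y_B$-automorphism arising from $\Xi(B)$ permutes the $D_{\rho(\alpha)}$ in the same way it permutes the $\rho(\alpha)$ requires either invoking naturality of the $\uGamma$-Hilbert scheme construction, or directly checking that $x_* D_{\rho(\alpha)}$ satisfies the intersection-Hom conditions defining $D_{x \cdot \rho(\alpha)}$, which in turn uses the $\widetilde{\Xi}(B)$-stability of $V_0 \subset V$. Once this is in hand, the remainder is bookkeeping with intersection numbers.
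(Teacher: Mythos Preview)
Your proposal is correct and follows essentially the same approach as the paper: both arguments reduce to the retraction $H^2(Y_B;\C)\simeq H^2(F;\C)$, identify the $c_1(L_D)$ as the basis dual to the exceptional curve classes (the paper phrases this via restriction to each $H^2(D;\C)$, you via the intersection pairing---these are the same computation), and deduce equivariance from the $\Xi(B)$-equivariance of the McKay bijections. The obstacle you flag is exactly the one the paper addresses, and it resolves it in the same way you suggest, by invoking the naturality of the $\uGamma$-Hilbert scheme construction (citing Crawley-Boevey).
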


\begin{proof}
Both statements will be proven simultaneously. We have defined the action of $\Xi(B)$ on $\Delta_B$ such that the bijection $\Delta_B \iso \Irr (\uGamma)$ is equivariant. Since the action of $\Xi(B)$ on $V_0 / \uGamma$ fixes the singular point, $\Xi(B)$ acts on $F$, permuting its irreducible components. Thus, there is a geometric action of $\Xi(B)$ on $\Irr (F)$. It follows from the beautiful interpretation of the bijection $\Irr (\uGamma) \iso \Irr (F)$ given in \cite{CBKleinian} that this bijection is $\Xi(B)$-equivariant; see also \cite[Section 6.2]{SchUn}. Thus, it suffices to check that the Chern characters $c_1(L_D)$, for $D \in \Irr (F)$, are a basis of $H^2(Y_B;\C)$ such that $x \cdot c_1(L_D) = c_1(L_{x \cdot D})$ for all $x \in \Xi(B)$. 

The action of $\Xi(B)$ on $Y_B$ commutes with the natural conic $\Cs$-action. Therefore, \cite[Proposition 4.3.1]{SlodowyFourLectures} shows that the embedding $F \hookrightarrow Y_B$ induces, by restriction, a $\Xi(B)$-equivariant isomorphism $H^2(Y_B,\C) \iso H^2(F,\C)$. Now, by the Mayer-Viratoris long exact sequence, the embeddings $D \hookrightarrow F$ identify $H^2(F;\C)$ with $\bigoplus_{D \in \Irr (F)} H^2(D;\C)$. Under the identification 
$$
H^2(Y_B,\C)  \iso \bigoplus_{D \in \Irr (F)} H^2(D;\C)
$$
the group $\Xi(B)$ acts by permuting the (one-dimensional) summands of the right-hand side. On the other hand, the image of $c_1(L_D)$ in $H^2(D';\C)$ is either a basis element if $D = D'$, or zero if $D \neq D'$, since $c_1 \left( \mc{O}_{\mathbb{P}^1} \right) = 0$ in $H^2(\mathbb{P}^1;\C)$. The claims of the lemma follow. 
\end{proof}

Define $Z$ to be the fiber product
$$
\begin{tikzpicture} 
\node (C1) at (-1.5,1.5) {$Z$};
\node (C2) at (1.5,1.5) {$Y$};
\node (C3) at (-1.5,0) {$U \times V_0 / \uGamma$};
\node (C4) at (1.5,0) {$V / \Gamma$};
\draw[->] (C1) -- (C2); 
\draw[->] (C1) -- (C3); 
\draw[->] (C2) -- (C4); 
\draw[->] (C3) -- (C4);
\node [above] at (0,1.5) {$\sigma'$};
\node [above] at (0,0) {$\sigma$};
\end{tikzpicture}
$$
Since $\sigma$ is \'etale, $\sigma'$ is also \'etale by base change. The following is based on \cite[Proposition 5.2]{KaledinDynkinArxiv}. 

\begin{prop}\label{prop:Ziso}
There is a $\Xi(B)$-equivariant isomorphism $U \times Y_B \iso Z$. 
\end{prop}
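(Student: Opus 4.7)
The approach, following \cite[Proposition 5.2]{KaledinDynkinArxiv}, is to realize $Z$ as a trivial family of minimal resolutions over $U$. First I would verify that $Z$ is smooth. For any $(b',[v_0]) \in U \times V_0/\uGamma$, the stabilizer of $b'+v_0$ in $\Gamma$ is conjugate to a subgroup of $\uGamma$, hence has rank at most one; so the image of $\sigma$ lies in $(V/\Gamma)_{\le 1}$. Since $\sigma$ is \'etale and $Y_{\le 1}$ is smooth, the base change $Z \to Y$ is \'etale onto an open subset of $Y_{\le 1}$, so $Z$ is itself smooth, and $Z \to U \times V_0/\uGamma$ is a projective crepant resolution of the family of Du Val surface singularities along $U \times \{0\}$.

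Next I would construct a $\Xi(B)$-equivariant morphism $\phi : U \times Y_B \iso Z$. Both $Z$ and $U \times Y_B$ are smooth projective crepant resolutions of the same base, agreeing on the common smooth locus $U \times (V_0/\uGamma \smallsetminus \{0\})$. For each $b' \in U$, the fiber of $Z$ over $\{b'\} \times V_0/\uGamma$ is a smooth crepant resolution of $V_0/\uGamma$, so by uniqueness of the minimal resolution of a Du Val surface singularity it is canonically isomorphic to $Y_B$. To upgrade this fiberwise identification to a scheme-theoretic morphism, I would invoke Crawley--Boevey's description $Y_B \simeq \mathrm{Hilb}^{\uGamma}(V_0)$ from \cite{CBKleinian}. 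Because $\uGamma$ acts trivially on $U$, the relative $\uGamma$-Hilbert scheme of $U \times V_0$ over $U$ is canonically $U \times Y_B$. On the other hand, pulling back the natural $\uGamma$-equivariant addition map $U \times V_0 \to V$, $(b',v) \mapsto b'+v$, along $Z \to U \times V_0/\uGamma$ produces a $\uGamma$-equivariant flat family of length-$|\uGamma|$ subschemes of $U \times V_0$ parameterised by $Z$, and the classifying morphism $Z \to U \times Y_B$ is the inverse of the desired $\phi$. Since it is fiberwise an isomorphism of smooth schemes of the same relative dimension over $U$, it is an isomorphism.

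The $\Xi(B)$-equivariance is automatic from functoriality: both actions descend from the $\widetilde{\Xi}(B) = N_{\Gamma}(\uGamma)$-action on $V = V^{\uGamma} \oplus V_0$, giving on $U \times Y_B$ the diagonal of the restricted actions (with the action on $Y_B$ being that of Section~\ref{sec:McKay}) and on $Z$ the Galois action of the \'etale cover $\sigma$. The main obstacle is the globalization step: promoting the fiberwise canonical identification of minimal resolutions to an honest morphism of schemes over $U$. The Hilbert-scheme interpretation of $Y_B$ combined with \'etale descent provides the cleanest mechanism; alternatively, a relative minimal-model uniqueness theorem for families of Du Val singularities over $U$ would also suffice.
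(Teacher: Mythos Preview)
Your Hilbert-scheme construction of the classifying morphism $Z \to U \times Y_B$ does not give an isomorphism. The family you obtain is the fiber product $Z \times_{U \times V_0/\uGamma} (U \times V_0)$, and over any point $z$ in the exceptional locus of $Z \to U \times V_0/\uGamma$ (say over $(b',0)$) the corresponding $\uGamma$-cluster is the scheme-theoretic fiber of $V_0 \to V_0/\uGamma$ over $0$, namely $\Spec \C[V_0]/\langle \C[V_0]^{\uGamma}_+\rangle$. This cluster is the \emph{same} for every $z$ in the exceptional fiber, so your classifying map collapses each exceptional $\mathbb{P}^1$ in $Z$ to the single point of $Y_B$ representing that central cluster. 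In other words, the map you build is precisely the composite $Z \to U \times V_0/\uGamma \hookrightarrow U \times Y_B$ (via the section given by the central cluster over the singular locus), not the desired isomorphism. The fact that $Y_B \simeq \mathrm{Hilb}^{\uGamma}(V_0)$ encodes nontrivial information about the exceptional fiber precisely because different points of that fiber correspond to \emph{different} $\uGamma$-invariant ideals supported at $0$; the naive pullback family sees none of this.

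Your closing remark that ``a relative minimal-model uniqueness theorem for families of Du Val singularities over $U$ would also suffice'' is in fact the correct route, and is what the paper implements. The paper first realizes $Z$ as a blow-up $\mathrm{Bl}(U_0,\mc{E})$ of $U_0 = U \times V_0/\uGamma$ along some ideal sheaf $\mc{E}$, using that $\Pic(U \times (V_0/\uGamma)_{\reg})$ is torsion together with \cite[Lemma 5.1]{KaledinDynkinArxiv}; the $\Xi(B)$-equivariance is inherited because the relatively ample bundle is pulled back from $Y$. The trivialization $\mathrm{Bl}(U_0,\mc{E}) \simeq U \times Y_B$ is then obtained by lifting the constant vector fields $v \in V^{\uGamma}$ from $U_0$ to $Z$, following \cite[Proposition 5.2]{KaledinDynkinArxiv}. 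The existence of these lifts is guaranteed by \cite[Lemma 5.3]{GK}, whose hypotheses (semi-smallness of $Z \to U_0$ and triviality of $K_Z$) follow from the corresponding properties of $\Yle \to (V/\Gamma)_{\le 1}$ via \'etale base change.
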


\begin{proof}
Set $U_0 = U \times V_0 / \uGamma$. Since $\sigma$ is \'etale, and $U_0 \times_{V/\Gamma} Y = U_0 \times_{V/\Gamma} \Yle$, the fiber product $Z$ is a smooth variety. Projective base change implies that it is projective over $U_0$. If $(V_0 / \uGamma)_{\reg}$ is the smooth locus of $V_0 / \uGamma$ then $U \times (V_0 / \uGamma)_{\reg}$ is an open subset of $V^{\uGamma} \times (V_0 / \uGamma)_{\reg}$ with compliment of codimension at least two. Hence $\Pic(U \times (V_0 / \uGamma)_{\reg}) \simeq \Pic((V_0 / \uGamma)_{\reg})$ is torsion. Therefore, the proof of \cite[Lemma 5.1]{KaledinDynkinArxiv} implies that there is a sheaf of ideals $\mc{E} \subset \mc{O}_{U_0}$ and an isomorphism 
$$
Z \iso \mathrm{Bl}(U_0, \mc{E}),
$$
of varieties projective over $U_0$, where $\mathrm{Bl}(U_0, \mc{E})$ is the blowup of $U_0$ along $\mc{E}$. Since the line bundle on $Z$, ample relative to $U_0$, used to embed $Z$ in $\mathbb{P}^N_{U_0}$ is the pullback of a line bundle on $Y$, ample relative to $V / \Gamma$, the identification $Z \simeq \mathrm{Bl}(U_0, \mc{E})$ is $\Xi(B)$-equivariant i.e. $\mc{E}$ is $\Xi(B)$-stable. To show that $ \mathrm{Bl}(U_0, \mc{E}) \simeq U \times Y_B$, we follow the proof of \cite[Proposition 5.2]{KaledinDynkinArxiv}. Based on the argument given there, it is clear that it suffices to show that all the vector fields $\mf{t}_v$ on $U_0$ coming from the constant coefficient vector fields $v \in V^{\uGamma}$ admit lifts to $Z$. 

The projective morphism $\rho : \Yle \rightarrow \rho(\Yle)$ is semi-small since $\Yle$ is a symplectic manifold \cite[Theorem 3.2]{FuSurvey}. Therefore, since the map $\sigma : U_0 \rightarrow V / \Gamma$ is finite onto its image, the map $Z \rightarrow U_0$ is also semi-small. Moreover, by \'etale base change, the fact that the canonical bundle on $\Yle$ is trivial implies that the canonical bundle on $Z$ is trivial too. Therefore, the required lifting follows from \cite[Lemma 5.3]{GK}. 
\end{proof}

\begin{lem}\label{lem:fundamental}
The fundamental group $\pi_1(\mc{L})$ of $\mc{L}$ equals $\Xi$. 
\end{lem}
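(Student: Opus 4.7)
\medskip

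\noindent\textbf{Proof plan.} The strategy is to exhibit $\mc{L}$ as a free quotient of a simply connected open set and then apply the exact sequence for a covering space. From the setup of Section~\ref{sec:McKay} one has $\mc{L}=\pi(\{v\in V\mid \Gamma_v=\uGamma\})$, and the stabilizer condition forces $v\in V^{\uGamma}$, so this preimage is exactly the open set $U\subset V^{\uGamma}$. If $v,v'\in U$ satisfy $\pi(v)=\pi(v')$, then $v'=g\cdot v$ for some $g\in\Gamma$, and comparing stabilizers gives $g\uGamma g^{-1}=\uGamma$, i.e.\ $g\in\widetilde{\Xi}(B)$. Since $\uGamma$ acts trivially on $V^{\uGamma}$, the action of $\widetilde{\Xi}(B)$ on $U$ descends to a free action of $\Xi=\widetilde{\Xi}(B)/\uGamma$ (freeness is exactly the defining condition of $U$). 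Hence the quotient map
$$q\colon U\longrightarrow U/\Xi\;\cong\;\mc{L}$$
is a Galois (unramified) covering of topological spaces with deck group~$\Xi$.

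The covering exact sequence then reduces the lemma to the claim that $U$ is simply connected. I would deduce this by showing that the complement of $U$ inside the complex vector space $V^{\uGamma}$ has complex codimension at least two, and hence real codimension at least four. The complement is the union of the linear subspaces $V^{\Gamma'}\cap V^{\uGamma}=V^{\Gamma'}$, taken over the finitely many parabolic subgroups $\Gamma'\subsetneq\Gamma$ strictly containing $\uGamma$. Because $\uGamma$ is minimal, any such $\Gamma'$ has $\mathrm{rank}(\Gamma')\ge 2$, so
$$\dim V^{\uGamma}-\dim V^{\Gamma'}=2\,\mathrm{rank}(\Gamma')-2\;\ge\;2.$$
Removing a closed subset of real codimension $\ge 4$ from a simply connected complex manifold does not change $\pi_1$, so $\pi_1(U)=\pi_1(V^{\uGamma})=1$. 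Combined with the previous step this yields $\pi_1(\mc{L})\cong\Xi$, as required.

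The only real subtlety is the codimension estimate, which is where the minimality hypothesis on $\uGamma$ enters crucially: one must know that no other minimal parabolic has the same fixed locus as $\uGamma$, so that genuinely enlarging $\uGamma$ as a parabolic forces the fixed locus to drop by at least two complex dimensions. This is immediate from the definition of rank, but it is the one place where the argument could go wrong if $\uGamma$ were not assumed minimal. The rest of the argument is purely formal, using only the description of $\mc{L}$ given in Section~\ref{sec:McKay} and the standard homotopy-type statement for removing a high-codimension subvariety from an affine space.
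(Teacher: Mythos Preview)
Your proposal is correct and follows the same approach as the paper: exhibit $\mc{L}\simeq U/\Xi$ as a free quotient, reduce via the covering exact sequence to $\pi_1(U)=1$, and obtain this from the fact that the complement of $U$ in $V^{\uGamma}$ has complex codimension at least two. The paper phrases the codimension estimate by observing that $V^{\Gamma'}$ is a symplectic (hence even-dimensional) proper subspace of $V^{\uGamma}$, which is exactly your rank computation in disguise; one minor remark is that this step does not actually hinge on minimality of $\uGamma$---it works for any parabolic, since parabolicity alone forces a strictly larger parabolic to have strictly smaller fixed locus.
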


\begin{proof}
The leaf $\mc{L}$ is the image under $\sigma$ of $U \times \{ 0 \} \subset U \times V_0 / \uGamma$. Thus, $\mc{L} \simeq U / \Xi$. Since $\Xi$ acts freely on $U$ this implies that we have a short exact sequence $1 \rightarrow \pi_1(U) \rightarrow \pi_1(\mc{L}) \rightarrow \Xi \rightarrow 1$. Hence, it suffices to show that $\pi_1(U)$ is trivial. The complement of $U$ in $V^{\uGamma}$ is the union of subspaces $V^{\uGamma} \cap V^{\uGamma'}$, where $\uGamma'$ is a parabolic subgroup of $\Gamma$ such that $V^{\uGamma} \cap V^{\uGamma'}$ is a proper subspace of $ V^{\uGamma}$. We may assume that $\uGamma \subsetneq \uGamma'$ so that $V^{\uGamma'} \subsetneq V^{\uGamma}$. But $V^{\uGamma'} $ is a symplectic subspace of $V$. Thus, $\dim V^{\uGamma'} < \dim V^{\uGamma} - 1$. Hence, the compliment of $U$ in $V^{\uGamma}$ has codimension at least two, implying that $\pi_1(U)$ is trivial. 
\end{proof}

If $(V / \Gamma)_0$ is the open leaf in $V / \Gamma$, then we denote by $Y_0$ the preimage of $(V / \Gamma)_0$ under $\rho$. The map $\rho$ is an isomorphism over $Y_0$. 

\begin{lem}\label{lem:Hvanish}
For $0 < i < 4$, the cohomology groups $H^i(U;\C)$ and $H^i(Y_0;\C)$ are zero. 
\end{lem}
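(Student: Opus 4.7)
My plan is to deduce both vanishings from the codimension estimate on the closed complement, via the long exact sequence of cohomology with supports, exploiting the fact that both ambient spaces ($V^{\uGamma}$ and the relevant part of $V$) are contractible.

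For $U$, the proof of Lemma~\ref{lem:fundamental} exhibits $V^{\uGamma} \setminus U = \bigcup_{\uGamma' \supsetneq \uGamma} V^{\uGamma'}$, with each $V^{\uGamma'}$ a symplectic subspace of $V$ properly contained in the symplectic subspace $V^{\uGamma}$. Since both dimensions are even, $\dim V^{\uGamma'} \le \dim V^{\uGamma} - 2$, so the complement has complex codimension at least $2$ in the affine space $V^{\uGamma}$. The long exact sequence
\[
\cdots \to H^j_{V^{\uGamma} \setminus U}(V^{\uGamma};\C) \to H^j(V^{\uGamma};\C) \to H^j(U;\C) \to H^{j+1}_{V^{\uGamma} \setminus U}(V^{\uGamma};\C) \to \cdots,
\]
combined with the contractibility of $V^{\uGamma}$, reduces the claim to the vanishing of the support cohomology $H^j_{V^{\uGamma} \setminus U}(V^{\uGamma};\C)$ in the relevant range of degrees.

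For $Y_0$, the identification $Y_0 \iso (V/\Gamma)_0 \iso V^{\mathrm{free}}/\Gamma$, using that $\rho$ is an isomorphism on $Y_0$ and that the open leaf is the image of the free locus $V^{\mathrm{free}} = \{v \in V \mid \Gamma_v = \{1\}\}$, together with the transfer isomorphism $H^i(V^{\mathrm{free}}/\Gamma;\C) = H^i(V^{\mathrm{free}};\C)^{\Gamma}$ for the free action of the finite group $\Gamma$ (which is valid with $\C$-coefficients), reduces the claim to the vanishing of $H^i(V^{\mathrm{free}};\C)$. The complement $V \setminus V^{\mathrm{free}} = \bigcup_{g \neq 1} V^g$ is again a union of symplectic subspaces, since for any $g \in \mathrm{Sp}(V)$ the fixed subspace $V^g$ is symplectic of even codimension at least $2$, and the same support-cohomology argument applies.

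The main obstacle is proving the vanishing $H^j_{A}(X;\C) = 0$ below the required degree, where $A$ is the closed union of symplectic subspaces of codim $\geq 2$. Each irreducible component $V^{\uGamma'}$ (resp.\ $V^g$) being smooth and contractible, the Thom isomorphism concentrates its support cohomology in degree $2\operatorname{codim} \geq 4$. Extending to the full union requires a Mayer--Vietoris argument, using crucially that pairwise intersections $V^{\uGamma_1'} \cap V^{\uGamma_2'} = V^{\langle \uGamma_1', \uGamma_2' \rangle}$ (and analogously $V^g \cap V^h = V^{\langle g, h \rangle}$) are themselves fixed subspaces of larger parabolic subgroups, hence again symplectic subspaces of even codimension at least $2$. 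This inductive structure on the poset of parabolic subgroups, stratifying $A$ by the codimension of the fixed loci, lets one conclude the vanishing of the total support cohomology in the stated range.
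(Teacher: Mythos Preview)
Your overall strategy coincides with the paper's: reduce to a codimension-$\ge 2$ complement in a contractible affine space, invoke the long exact sequence for the pair, and for $Y_0$ use the transfer isomorphism $H^i(V^{\mathrm{free}}/\Gamma;\C)\cong H^i(V^{\mathrm{free}};\C)^{\Gamma}$ to pass to the free locus in $V$. The difference lies only in how you establish $H^j_A(X;\C)=0$ for $j<4$ when $A$ is the closed complement. You argue by Thom isomorphism on each linear piece together with a Mayer--Vietoris induction over the poset of fixed subspaces; the paper instead uses the one-line dimension bound for Borel--Moore homology, namely $H^{\mathrm{BM}}_j(A;\C)=0$ for $j>\dim_{\R}A$, which under Alexander duality in the ambient affine space gives $H^i(X,X\smallsetminus A;\C)=0$ for $i<2\,\mathrm{codim}_{\C}A\ge 4$. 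This requires no smoothness of $A$, no stratification, and no induction, so it is strictly simpler than your route while proving the same thing.

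Two small points on your version. First, the subgroup $\langle\uGamma_1',\uGamma_2'\rangle$ (and likewise $\langle g,h\rangle$) need not be \emph{parabolic}; what you actually use, and what is true, is that it is a finite subgroup of $\mathrm{Sp}(V)$ strictly containing $\uGamma$, so its fixed space is a proper symplectic subspace of $V^{\uGamma}$ and hence of complex codimension $\ge 2$. Second, your Mayer--Vietoris induction does go through: writing $A=B\cup A_k$ one needs $H^{j-1}_{B\cap A_k}(X;\C)=0$ for $j\le 3$, and $B\cap A_k$ is again a finite union of at most $k-1$ linear subspaces of codimension $\ge 2$, so the induction hypothesis applies. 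Still, replacing this bookkeeping by the Borel--Moore bound is the cleaner way to finish.
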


\begin{proof}
As shown in the proof of Lemma \ref{lem:fundamental}, the compliment $C$ to $U$ in $V^{\uGamma}$ has complex codimension at least two. Therefore, 
$$
H^{\mathrm{BM}}_j(C;\C) = H^{2 \dim_{\C} V^{\uGamma} - i}\left( V^{\uGamma}_{\R},V^{\uGamma}_{\R} \smallsetminus C; \C \right) = 0, \quad \forall \ j > 2 \dim_{\C} C,
$$
where $\mathrm{BM}$ indicates Borel-Moore homology. This implies that $H^{i}\left(V^{\uGamma}_{\R},V^{\uGamma}_{\R} \smallsetminus C; \C\right) = 0$ for $i < 4$. Since $H^i\left(V^{\uGamma}_{\R};\C\right) = 0$ for $i > 0$, the first claim follows from the long exact sequence in relative cohomology. For the second claim, we note first that if $V_{\reg}$ is the open subset of $V$ on which $\Gamma$ acts freely, then $\rho$ restricts to an isomorphism $Y_0 \iso \pi(V_{\reg})$. On $V_{\reg}$, the map $\pi$ is a covering with Galois group $\Gamma$. Therefore, by \cite[Proposition 3.G.1]{Hatcher}, it suffices to show that $H^i(V_{\reg};\C) = 0$ for $0 < i < 4$. Again, this follows from the fact that the compliment to $V_{\reg}$ in $V$ has complex codimension at least $2$. 
\end{proof}

\begin{lem}\label{lem:H2facts}
Fix a $\Q$-factorial terminalization $\rho : Y \rightarrow V/\Gamma$. Then $H^2(Y_{\le 1};\R) = H^2(Y;\R)$.
\end{lem}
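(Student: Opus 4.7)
The plan is to establish two codimension estimates and combine them via standard cohomological arguments. First, since $Y$ has terminal singularities, the singular locus $Y_{\mathrm{sing}}$ has complex codimension at least four in $Y$ (this is part of Namikawa's characterization of terminal symplectic singularities). Second, the crepant map $\rho : Y \to V/\Gamma$ is semi-small by a theorem of Kaledin, which applies equally well to any $\Q$-factorial terminalization of a symplectic variety: the fiber over a stratum of codimension $2k$ has dimension at most $k$. The complement of $(V/\Gamma)_{\le 1}$ in $V/\Gamma$ is a closed union of symplectic leaves of codimension at least four, so semi-smallness yields that $Y \smallsetminus Y_{\le 1} = \rho^{-1}\bigl((V/\Gamma) \smallsetminus (V/\Gamma)_{\le 1}\bigr)$ has complex codimension at least two in $Y$.

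Since $Y_{\le 1}$ is contained in the smooth locus $Y_{\mathrm{sm}}$ (already noted in the paper), I factor the restriction map as
$$
H^2(Y;\R) \xrightarrow{\ \alpha\ } H^2(Y_{\mathrm{sm}};\R) \xrightarrow{\ \beta\ } H^2(Y_{\le 1};\R)
$$
and prove each of $\alpha$ and $\beta$ is an isomorphism. For $\beta$: the ambient space $Y_{\mathrm{sm}}$ is a smooth complex variety, and $Y_{\mathrm{sm}} \smallsetminus Y_{\le 1}$ is closed of complex codimension at least two in it. By the Thom--Gysin isomorphism, $H^i_{Y_{\mathrm{sm}} \smallsetminus Y_{\le 1}}(Y_{\mathrm{sm}};\R) = 0$ for $i < 4$, so the long exact sequence in relative cohomology gives $H^i(Y_{\mathrm{sm}};\R) \iso H^i(Y_{\le 1};\R)$ for $i \le 2$.

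For $\alpha$: here the ambient space is singular, so the naive Gysin argument does not apply directly. The approach is to choose a resolution $\pi : \widetilde{Y} \to Y$ that is an isomorphism over $Y_{\mathrm{sm}}$, so that $\pi^{-1}(Y_{\mathrm{sm}})$ may be identified with $Y_{\mathrm{sm}}$ itself. Since $Y$ has symplectic (hence rational) singularities, the pullback $H^*(Y;\R) \hookrightarrow H^*(\widetilde{Y};\R)$ is injective and identifies $H^2(Y;\R)$ with a canonical direct summand (via Kollár's theorem on rational singularities, or Kaledin's decomposition theorem for symplectic varieties). Since $\widetilde{Y} \smallsetminus \pi^{-1}(Y_{\mathrm{sm}}) = \pi^{-1}(Y_{\mathrm{sing}})$ has complex codimension at least one but its image has codim $\ge 4$, applying the Thom--Gysin codimension bound on the smooth $\widetilde{Y}$ together with the decomposition identifies $H^2(Y;\R)$ with $H^2(Y_{\mathrm{sm}};\R)$.

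The main obstacle I expect is this first isomorphism $\alpha$: a clean comparison between $H^2$ of a singular variety and its smooth locus is not automatic from codimension alone, and must genuinely use that $Y$ has symplectic singularities together with codimension $\ge 4$ of $Y_{\mathrm{sing}}$. One should verify carefully that the resolution/decomposition argument above closes up in degree two; an attractive alternative is to invoke Namikawa's results on $H^2$ of symplectic varieties, which are tailored precisely to this setting and allow one to identify $H^2(Y;\R)$ with $H^2(Y_{\le 1};\R)$ in a single step.
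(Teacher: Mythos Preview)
Your approach is essentially the paper's: terminal singularities give $\operatorname{codim}_Y Y_{\mathrm{sing}}\ge 4$, semi-smallness of $\rho$ controls the codimension of $Y\smallsetminus Y_{\le 1}$, and then one runs the Borel--Moore/long-exact-sequence argument of Lemma~\ref{lem:Hvanish}. One correction in your favour: semi-smallness only yields $\operatorname{codim}(Y\smallsetminus Y_{\le 1})\ge 2$ (over a leaf of codimension $2k$ the fibre has dimension $\le k$, so the preimage has codimension $\ge k$, and $k=2$ can occur), whereas the paper asserts $\ge 4$; this is harmless, since complex codimension $\ge 2$ is exactly what the argument of Lemma~\ref{lem:Hvanish} uses to obtain vanishing of relative cohomology in degrees $<4$.

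You are also right to isolate the step you call $\alpha$ as the delicate one: the argument of Lemma~\ref{lem:Hvanish} uses Lefschetz duality in a \emph{smooth} ambient space, and the paper's one-line invocation of it does not comment on the singularity of $Y$. Your factoring through $Y_{\mathrm{sm}}$ and the Thom--Gysin argument for $\beta$ are fine. Your sketch for $\alpha$, however, does not close as written: the exceptional locus of $\pi:\widetilde{Y}\to Y$ is a divisor, so Thom--Gysin on $\widetilde{Y}$ contributes to $H^2$, and deciding which summand of $H^2(\widetilde{Y})$ matches $H^2(Y)$ versus $H^2(Y_{\mathrm{sm}})$ is exactly the statement you are trying to prove. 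Your second suggestion---appealing to results specific to symplectic varieties---is the right instinct. One clean way to phrase what is needed: the Borel--Moore long exact sequence already gives $H^{\mathrm{BM}}_{2n-2}(Y)\simeq H^{\mathrm{BM}}_{2n-2}(Y_{\le 1})\simeq H^2(Y_{\le 1};\R)$ (the last by Poincar\'e duality on the smooth $Y_{\le 1}$), so the residual issue is $H^{\mathrm{BM}}_{2n-2}(Y;\R)\simeq H^2(Y;\R)$, i.e.\ that $Y$ behaves as a rational homology manifold in degree $2$; this is where the symplectic/$\Q$-factorial hypothesis genuinely enters.
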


\begin{proof}
Let $Y^o$ denote the smooth locus of $Y$ and $X^o$ its image under $\rho$. Since $Y$ has terminal singularities, the compliment of $Y^o$ in $Y$ has codimension at least $4$ \cite{NamikawaNote}. Then \cite[Theorem 3.2]{FuSurvey} says that $\rho$ restricted to $Y^o$ is a semi-small map. Therefore $Y \smallsetminus \Yle$ has codimension at least $4$ in $Y$. The lemma follows from the argument given in the proof of Lemma \ref{lem:Hvanish} above.
\end{proof}

\begin{prop}\label{prop:H2iso}
The restriction maps $H^2(Y; \C) \rightarrow H^2(Y_B;\C)$ induce an isomorphism 
$$
H^2(Y; \C) \iso \bigoplus_{B \in \mc{B}} H^2(Y_B;\C)^{\Xi(B)}.
$$
\end{prop}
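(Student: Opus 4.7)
The plan is to reduce from $Y$ to its open subset $Y_{\le 1}$ using Lemma \ref{lem:H2facts}, and then to decompose $H^2(Y_{\le 1};\C)$ according to the disjoint exceptional loci $\widetilde{\mc{L}}_B := \rho^{-1}(\mc{L}_B) \cap Y_{\le 1}$, computing each local contribution via the étale model of Proposition \ref{prop:Ziso}. The first step is immediate from Lemma \ref{lem:H2facts}: it gives the identification $H^2(Y;\C) \cong H^2(Y_{\le 1};\C)$ compatibly with restriction to the $Y_B$, so from now on I work with $Y_{\le 1}$.

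Since $Y_{\le 1}\setminus Y_0 = \bigsqcup_{B \in \mc{B}} \widetilde{\mc{L}}_B$ and since $H^1(Y_0;\C) = H^2(Y_0;\C) = 0$ by Lemma \ref{lem:Hvanish}, the long exact sequence of the pair $(Y_{\le 1},Y_0)$ yields $H^2(Y_{\le 1};\C) \cong H^2(Y_{\le 1}, Y_0;\C)$. I then choose pairwise disjoint open tubular neighbourhoods $W_B \supset \widetilde{\mc{L}}_B$ — for instance of the form $\rho^{-1}(\sigma(U \times V_0^\epsilon/\uGamma))$ for $\epsilon$ small, using that every nonzero vector of $V_0$ has trivial $\uGamma$-stabilizer — so that standard excision produces the splitting
$$H^2(Y_{\le 1}, Y_0;\C) \;\cong\; \bigoplus_{B \in \mc{B}} H^2(W_B, W_B \cap Y_0;\C).$$

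To compute each summand I would pull back along the étale $\Xi(B)$-Galois cover provided by Proposition \ref{prop:Ziso}: the restriction of $Z \cong U \times Y_B \to Y_B'$ to $W_B$ is an étale cover by $U \times Y_B^\epsilon$, where $Y_B^\epsilon$ is a $\Cs$-contracting tubular neighbourhood of the exceptional fibre in $Y_B$ and hence has the same cohomology as $Y_B$. Combining the identification $H^*(X/\Xi;\C) = H^*(X;\C)^\Xi$ for finite-group quotients, the Künneth formula, and the vanishing $H^1(U;\C) = H^2(U;\C) = 0$ from Lemma \ref{lem:Hvanish}, I obtain $H^2(W_B;\C) \cong H^2(Y_B;\C)^{\Xi(B)}$. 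Analogously, $W_B \cap Y_0$ is étale-covered by $U \times ((V_0^\epsilon/\uGamma)\setminus \{0\})$, and since $(V_0/\uGamma)\setminus \{0\}$ deformation retracts onto the free spherical quotient $S^3/\uGamma$, whose rational cohomology coincides with that of $S^3$, both $H^1$ and $H^2$ of $W_B \cap Y_0$ vanish. The long exact sequence of $(W_B, W_B \cap Y_0)$ then identifies the local relative group with $H^2(Y_B;\C)^{\Xi(B)}$.

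Finally, I would verify that the resulting composite isomorphism is precisely the restriction map $H^2(Y;\C) \to H^2(Y_B;\C)$: a fibre inclusion $Y_B = \{b\}\times Y_B \hookrightarrow U \times Y_B \cong Z$ composed with the étale projection $Z \to Y_B' \hookrightarrow Y$ recovers the original embedding $Y_B \hookrightarrow Y$, and under the Künneth identification $H^2(U \times Y_B;\C) \cong H^2(Y_B;\C)$ this corresponds to restriction along the chosen fibre. I expect the main technical work to lie not in the cohomological input — which reduces by Lemmas \ref{lem:Hvanish} and \ref{lem:H2facts} and the computation of $H^*(S^3/\uGamma;\C)$ to a short list of vanishings — but rather in the book-keeping required to choose the tubular neighbourhoods compatibly with Proposition \ref{prop:Ziso} and to ensure that the excision, Künneth, and Galois descent identifications commute with the geometric restriction maps.
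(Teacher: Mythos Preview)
Your overall strategy is sound and uses exactly the same core inputs as the paper: the reduction to $\Yle$ via Lemma~\ref{lem:H2facts}, the vanishing from Lemma~\ref{lem:Hvanish}, the \'etale model from Proposition~\ref{prop:Ziso}, K\"unneth, and Galois descent for cohomology of a free finite quotient. The paper, however, organises the decomposition differently. Instead of passing to relative cohomology and excising to disjoint tubular neighbourhoods, it takes the \emph{full} open sets $Y(B) := \rho^{-1}(\sigma(U \times V_0/\uGamma))$ (no shrinking), notes that $Y(B) \cap Y(B') = Y_0$ for $B \neq B'$ and $\bigcup_B Y(B) = \Yle$, and applies Mayer--Vietoris inductively using $H^1(Y_0;\C) = H^2(Y_0;\C) = 0$. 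This gives $H^2(\Yle;\C) \cong \bigoplus_B H^2(Y(B);\C)$ directly. Since $\sigma' : Z \to Y(B)$ is already the free $\Xi(B)$-quotient and $Z \cong U \times Y_B$, the identification $H^2(Y(B);\C) \cong H^2(Y_B;\C)^{\Xi(B)}$ is immediate. No tubular neighbourhoods, no relative long exact sequences, and no $S^3/\uGamma$ computation are needed.

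There is also a small but genuine gap in your construction: the proposed neighbourhoods $W_B = \rho^{-1}(\sigma(U \times V_0^\epsilon/\uGamma))$ are \emph{not} pairwise disjoint in general, no matter how small $\epsilon$ is. Take $\Gamma = \Z_2 \times \Z_2$ acting coordinatewise on $\C^2 \times \C^2$: the two minimal parabolics give $W_{B_1} \supset \pi(\{(a,b): |a| < \epsilon,\ b \neq 0\})$ and $W_{B_2} \supset \pi(\{(a,b): a \neq 0,\ |b| < \epsilon\})$, which meet whenever $0 < |a|,|b| < \epsilon$. The issue is that $U$ is non-compact, so shrinking in the $V_0$-direction alone cannot separate the neighbourhoods. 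You can repair this by intersecting with abstract disjoint opens (which exist by normality) and checking that Proposition~\ref{prop:Ziso} restricts over any open $U' \subset U$; but at that point the paper's Mayer--Vietoris route is both shorter and eliminates precisely the bookkeeping you flagged in your final paragraph.
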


\begin{proof}
By Lemma \ref{lem:H2facts}, it suffices to show that the restriction maps $H^2(\Yle; \C) \rightarrow H^2(Y_B;\C)$ induce an isomorphism 
$$
H^2(\Yle; \C) \iso \bigoplus_{B \in \mc{B}} H^2(Y_B;\C)^{\Xi(B)}.
$$
Let $Y(B) \subset Y$ be the open set $\rho^{-1}(\sigma(U \times V_0 / \uGamma))$. Then for $B \neq B'$ in $\mc{B}$, we have $Y(B) \cap Y(B') = Y_0$ and $\Yle = \bigcup_{B} Y(B)$. We claim that restriction defines an isomorphism 
$$
H^2(\Yle;\C) \iso \bigoplus_{B \in \mc{B}} H^2(Y(B);\C). 
$$
This follows from the Mayer-Vietoris sequence by induction on $|\mc{B}|$, using the fact that $H^i(Y_0; \C) = 0$ for $0 < i < 4$ by Lemma \ref{lem:Hvanish}. Therefore we are reduced to showing that restriction $ H^2(Y(B);\C) \rightarrow H^2(Y_B;\C)$ is injective with image $H^2(Y_B;\C)^{\Xi(B)}$. 

Recall that we identified $Y_B$ with a closed subset of $Y$ by first fixing $b \in U$ and identifying $V_0 / \uGamma$ with $\sigma(\{ b \} \times V_0 / \uGamma)$ in $V / \Gamma$. Therefore, the closed embedding $Y_B \hookrightarrow Y$ factors as $Y_B \stackrel{j}{\rightarrow} Z \stackrel{\sigma}{\rightarrow} Y$, where $j$ is the closed embedding $u \mapsto (b,u)$ in $Z \simeq U \times Y_B$ of Proposition \ref{prop:Ziso}. Then Lemma \ref{lem:Hvanish} and the Kunneth formula imply that $j$ induces an identification  $H^2(Z;\C) = H^2(Y_B;\C)$.  

The image of $Z$ in $Y$ under the natural map $\sigma' : Z \rightarrow Y$ equals $Y(B)$. Recall that this map is just the quotient map for the free action of $\Xi(B)$ on $Z$. Therefore, by \cite[Proposition 3.G.1]{Hatcher}, pullback along $\sigma'$ is injective with image $H^2(Z;\C)^{\Xi(B)} = H^2(Y_B;\C)^{\Xi(B)}$. 
\end{proof}

Theorem \ref{prop:Weylgroup} is now a direct consequence of Proposition \ref{prop:H2iso} and the proof of Theorem 1.1 of \cite{Namikawa2} given in \textit{loc. cit.} In particular, the identification $W \simeq \prod_{B \in \mc{B}} W_B$ follows from Lemma 1.2 of \textit{loc. cit.}

\begin{remark}
Let $\mc{L}$ be the leaf in $V / \Gamma$ labeled by $B \in \mc{B}$. The restriction of $\rho$ to $\rho^{-1}(\mc{L})$ is (in the analytic topology) a fiber bundle with fiber $F$. Therefore, by Lemma \ref{lem:fundamental}, the action of $\Xi(B)$ on $H^2(Y_B; \C) \simeq H^2(F;\C)$ is the monodromy action of $\pi_1(\mc{L}) = \Xi(B)$. 
\end{remark}

\section{Calogero-Moser deformations}\label{sec:CMdef}

Our approach to the proof of Theorem \ref{thm:main} will be by analogy with the proof of \cite[Theorem 1.1]{Namikawa2}. As in previous sections we fix a $\Q$-factorial terminalization $\rho : Y \rightarrow V / \Gamma$.

\subsection{Formally universal Poisson deformations} Recall from Lemma \ref{lem:H2facts} that the cohomology group $H^2(\Yle;\C)$ equals $H^2(Y;\C)$. By \cite[Theorem 5.5]{Namikawa} and \cite[Theorem 1.1]{Namikawa}, there are flat Poisson deformations $\nu : \mc{X} \rightarrow H^2(Y;\C) / W$, and $\boldsymbol{\nu} : \mc{Y} \rightarrow H^2(Y;\C)$, of $V / \Gamma$ and $Y$ respectively, such that the diagram 
\begin{equation}\label{eq:commdiNam}
\begin{tikzpicture} 
\node (C1) at (-1.5,1.5) {$\mc{Y}$};
\node (C2) at (1.5,1.5) {$\mc{X}$};
\node (C3) at (-1.5,0) {$H^2(Y;\C)$};
\node (C4) at (1.5,0) {$H^2(Y;\C) / W$};
\draw[->] (C1) -- (C2); 
\draw[->] (C1) -- (C3); 
\draw[->] (C2) -- (C4); 
\draw[->] (C3) -- (C4);
\node [left] at (-1.5,0.75) {$\boldsymbol{\nu}$};
\node [right] at (1.5,0.75) {$\nu$};
\end{tikzpicture}
\end{equation}
is commutative. Moreover, the natural conic action of the torus $\Cs$ on $V/\Gamma$ lifts to the flat families $\mc{X} \rightarrow H^2(Y;\C) / W$ and $\mc{Y} \rightarrow H^2(Y;\C)$ in such a way that $\lambda \cdot h = \lambda^2 h$ for all $h \in H^2(Y;\C)^* \subset \C[H^2(Y;\C)]$ and $\lambda \in \Cs$. The maps in (\ref{eq:commdiNam}) are equivariant for this action.  

The flat Poisson deformation $\mc{X} \rightarrow H^2(Y;\C) / W$ is universal in the following sense. If $\mc{X}' \rightarrow T$ is a flat Poisson deformation of $V/\Gamma$ over a local Artinian $\C$-scheme, then there exists a unique morphism $T \rightarrow H^2(Y;\C) / W$ such that $\mc{X}' \simeq \mc{X} \times_{H^2(Y;\C) / W} T$ as Poisson schemes over $T$. Notice that the map $T \rightarrow H^2(Y;\C) / W$ necessarily factors through the completion of $H^2(Y;\C) / W$ at zero. Thus,  $\mc{X} \rightarrow H^2(Y;\C) / W$ is said to be the \textit{formally universal} Poisson deformation of $V/\Gamma$. Similarly, the deformation $\mc{Y} \rightarrow H^2(Y;\C)$ of $Y$ is formally universal.   

As written, formally universal Poisson deformations of $V/\Gamma$ are clearly not unique, since the definition only involves the completion of the base of the deformation at the special fiber. However, the torus $\Cs$ acts naturally on the ring of functions on this formal neighborhood of the special fiber, see \cite[Section 5.4]{Namikawa}, and $H^2(Y;\C) / W$ is unique in the sense that it is the globalization, as explained in section \ref{sec:global} below, of the formal neighborhood.  

\subsection{Symplectic reflection algebras}\label{sec:SRA} The set of \textit{symplectic reflections} $\mc{S}$ in $\Gamma$ is the set of all elements $s$ such that $\mathrm{rk}_V (1 - s) = 2$.  Let $S_1, \ds, S_r$ be the $\Gamma$-conjugacy classes in $\mc{S}$ and $\bc_1,\ds ,\bc_r$ the characteristic functions on $\mc{S}$ such that $\bc_i(s) = 1$ if $s \in S_i$, and is zero otherwise. The linear span of $\bc_1,\ds ,\bc_r$ is denoted $\mf{c}$. Since we do not require the explicit definition of the symplectic reflection algebras $\mathbf{H}(\Gamma)$, and will only use results from \cite{LosevSRAComplete} about them, we refer the read to \textit{loc. cit.} for their definition. Recall that $\uGamma_B$ is a representative in the conjugacy class  $B$ of minimal parabolic subgroups of $\Gamma$. 

\begin{lem}\label{lem:bij1}
The natural map $\zeta : \bigsqcup_{B \in \mc{B}} (\uGamma_B \smallsetminus \{ 1 \}) / \widetilde{\Xi}(B) \longrightarrow \mc{S} / \Gamma$ is a bijection. 
\end{lem}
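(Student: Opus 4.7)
The plan is to verify well-definedness, surjectivity, and injectivity of $\zeta$ separately, using throughout the following structural observation which I would establish first: for any minimal parabolic subgroup $\uGamma \subset \Gamma$, the pointwise stabilizer in $\Gamma$ of the symplectic subspace $V^{\uGamma}$ coincides with $\uGamma$ itself. Indeed, writing $\uGamma = \Gamma_v$ for some $v \in V$ (so automatically $v \in V^{\uGamma}$), any $g \in \Gamma$ which acts trivially on $V^{\uGamma}$ in particular fixes $v$ and hence lies in $\Gamma_v = \uGamma$; the reverse inclusion is by definition of $V^{\uGamma}$.

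For well-definedness, I would verify that every non-identity element $s \in \uGamma_B$ is a symplectic reflection. Writing $V = V^{\uGamma_B} \oplus V_0$ as $\uGamma_B$-modules with $V_0$ two-dimensional symplectic, $\uGamma_B$ embeds into $\mathrm{Sp}(V_0) = SL(V_0) \cong SL(2,\C)$; since any non-trivial finite-order element of $SL(2,\C)$ has no non-zero fixed vectors (being neither unipotent nor trivial), $V^s = V^{\uGamma_B}$ and $\mathrm{rk}_V(1-s) = 2$, so $s \in \mc{S}$. Since $\widetilde{\Xi}(B) \subset \Gamma$, the map to $\mc{S}/\Gamma$ factors through $\widetilde{\Xi}(B)$-orbits.

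For surjectivity, given $s \in \mc{S}$, the subspace $V^s$ is a codimension-two symplectic subspace of $V$. I would choose a vector $v \in V^s$ lying outside the finite union of proper subspaces $V^s \cap V^g$ for those $g \in \Gamma$ with $V^g \not\supseteq V^s$; for such generic $v$, the stabilizer $\Gamma_v$ is exactly the pointwise stabilizer of $V^s$, contains $s$, and satisfies $V^{\Gamma_v} = V^s$, so is a minimal parabolic. Conjugating $\Gamma_v$ to the fixed class representative $\uGamma_B$ sends $s$ to an element of $\uGamma_B \setminus \{1\}$ in the same $\Gamma$-class. For injectivity, suppose $s \in \uGamma_B \setminus \{1\}$, $s' \in \uGamma_{B'} \setminus \{1\}$, and $g s g^{-1} = s'$ for some $g \in \Gamma$. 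Since $V^s = V^{\uGamma_B}$ and $V^{s'} = V^{\uGamma_{B'}}$ by the well-definedness step, $g V^{\uGamma_B} = V^{\uGamma_{B'}}$; applying the structural observation, the pointwise stabilizers give $g \uGamma_B g^{-1} = \uGamma_{B'}$, forcing $B = B'$ and $g \in N_\Gamma(\uGamma_B) = \widetilde{\Xi}(B)$, as required.

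The main subtlety, though a mild one, is the generic-point argument in surjectivity: one has to be sure that a suitably generic $v \in V^s$ exists and that its stabilizer is exactly the pointwise stabilizer of $V^s$ rather than something larger. Everything else is then a clean orbit–stabilizer manipulation built on the observation that a minimal parabolic is determined by, and equal to the pointwise stabilizer of, its codimension-two fixed subspace.
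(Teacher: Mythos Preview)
The paper states this lemma without proof; it is treated as a routine observation left to the reader. Your argument is correct and supplies exactly the details one would expect: the key structural fact that a minimal parabolic $\uGamma$ equals the pointwise stabilizer of $V^{\uGamma}$, the verification that non-identity elements of a minimal parabolic are symplectic reflections (via the elementary observation that non-trivial finite-order elements of $SL(2,\C)$ have no non-zero fixed vectors), the generic-point argument for surjectivity, and the stabilizer computation for injectivity. Each step is sound, including the mild subtlety you flag about choosing a generic $v \in V^s$.
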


Let $\mf{c}_B$ be the subspace of $\mf{c}$ spanned by all $\bc_i$ such that $S_i \cap \uGamma_B \neq \emptyset$. Lemma \ref{lem:bij1} implies that $\mf{c} = \bigoplus_{B \in \mc{B}} \mf{c}_B$. Choose $B \in \mc{B}$. Via the McKay correspondence (\ref{eq:mckaybij}), an element $h \in \h^*_B$ can be considered as a linear combination of the non-trivial characters of $\uGamma_B$. In other words, it is a class function on $\uGamma_B$. Hence an element of $\mf{a}_B = (\h_B^*)^{\Xi(B)}$ is a $\widetilde{\Xi}(B)$-equivariant function $\uGamma_B \rightarrow \C$, where $\widetilde{\Xi}(B)$ acts trivially on $\C$. Thus, we may define an isomorphism 
\begin{equation}\label{eq:mcKay}
\varpi : \mf{c} = \bigoplus_{B \in \mc{B}} \mf{c}_B \stackrel{\sim}{\longrightarrow} \bigoplus_{B \in \mc{B}}  \mf{a}_B, \quad \mf{c}_B \ni \bc \ \mapsto \ (g \mapsto \bc(\zeta(g))). 
\end{equation}
As explained in section \ref{sec:uvsp}, the spherical subalgebra $e \mathbf{H}(\Gamma) e$ is a commutative $\C[\mf{c}]$-subalgebra, equipped with a natural Poisson structure, such that the flat family $\vartheta : \CM(\Gamma) \rightarrow \mf{c}$ is a Poisson deformation of $V / \Gamma$.   

\subsection{Globalization}\label{sec:global}
Suppose we have two conic affine varieties $X$ and $Y$ i.e. $\C[X]$ and $\C[Y]$ are positively graded algebras with degree zero part equal to $\C$, and an equivariant morphism $\gamma : X \rightarrow Y$. Let $X^{\wedge}$ and $Y^{\wedge}$ denote the completions of $X$ and $Y$ respectively at the $\Cs$-fixed point. As shown in \cite[Lemma (A.2)]{NamikawaPoissondeformations}, $\C[X]$ is the ring of $\Cs$-locally finite ($=$ rational) vectors in $\C[X^{\wedge}]$. We say that $\gamma$ is the \textit{globalization} of $\hat{\gamma} : X^{\wedge} \rightarrow Y^{\wedge}$ if, under the identification of $\C[X]$ with rational vectors in $\C[X^{\wedge}]$ and similarly for $\C[Y] \subset \C[Y^{\wedge}]$, $\gamma$ is just the restriction of $\hat{\gamma}$; see \cite[Appendix]{NamikawaPoissondeformations}. 

If $\mc{X}^{\wedge}$ is the completion of $\mc{X}$ along the closed subvariety $V / \Gamma$ and $(H^2(Y;\C) / W)^{\wedge}$ the completion of $H^2(Y;\C) / W$ at $o$, then $\nu$ is the globalization of the induced map of formal schemes $\mc{X}^{\wedge} \rightarrow (H^2(Y;\C) / W)^{\wedge}$. The latter is the universal Poisson deformation of $V / \Gamma$ in the category of pro-Artinian local $\C$-algebras. The analogous statement holds for $\mc{Y} \rightarrow H^2(Y;\C)$; see \cite[Section 5]{Namikawa}. 

The Calogero-Moser deformation $\vartheta : \CM(\Gamma) \rightarrow \mf{c}$ of $V / \Gamma$ is also $\Cs$-equivariant, where $\Cs$ acts on $\mf{c}^* \subset \C[\mf{c}]$ by $\lambda \cdot \bc = \lambda^2 \bc$. This is a consequence of the fact that the symplectic reflection algebra $\mathbf{H}(\Gamma)$ is naturally $\N$-graded, such that $\mf{c}^* \subset \mathbf{H}(\Gamma)$ has degree two, $V^*$ has degree one and $\Gamma$ sits in degree zero. Moreover, if $\mathbf{H}(\Gamma)^{\wedge}$ is the completion of $\mathbf{H}(\Gamma)$ along the two-sided ideal generated by $\mf{c}^*$, then one can identify $\mathbf{H}(\Gamma)$ with the subalgebra of $\mathbf{H}(\Gamma)^{\wedge}$ of rational vectors. This implies that $\CM(\Gamma) \rightarrow \mf{c}$ is the globalization of $\CM(\Gamma)^{\wedge} \rightarrow \widehat{\mf{c}}$, where $\CM(\Gamma)^{\wedge}$ is the completion of $\CM(\Gamma)$ along $V / \Gamma$ and $\widehat{\mf{c}}$ the completion of $\mf{c}$ at zero. Hence, there exists a unique $\Cs$-equivariant morphism $\hat{\alpha} : \widehat{\mf{c}} \rightarrow (H^2(Y;\C) / W)^{\wedge}$ such that $\CM(\Gamma)^{\wedge} \simeq \widehat{\mf{c}} \times_{(H^2(Y;\C) / W)^{\wedge}} \mc{X}^{\wedge}$. This implies:

\begin{lem}\label{lem:classmap}
There exists a unique $\Cs$-equivariant map $\alpha : \mf{c} \rightarrow H^2(Y;\C) / W$ such that 
$$
\CM(\Gamma) \simeq \mf{c} \times_{H^2(Y;\C) / W} \mc{X}.
$$
\end{lem}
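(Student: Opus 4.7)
The plan is to deduce the statement from the formal version already constructed in the paragraph preceding it, by applying the globalization functor of \cite[Appendix]{NamikawaPoissondeformations}. The key input is the $\Cs$-equivariant formal map
$$
\hat{\alpha} : \widehat{\mf{c}} \longrightarrow (H^2(Y;\C)/W)^{\wedge}
$$
together with the formal isomorphism $\CM(\Gamma)^{\wedge} \simeq \widehat{\mf{c}} \times_{(H^2(Y;\C)/W)^{\wedge}} \mc{X}^{\wedge}$, whose existence has just been noted via the universal property of Namikawa's deformation.

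The first step is to construct $\alpha$ itself. Since both $\mf{c}$ and $H^2(Y;\C)/W$ are conical affine varieties (with the $\Cs$-action of weight $2$ on the linear coordinates), Lemma (A.2) of \cite{NamikawaPoissondeformations} identifies their coordinate rings with the subalgebras of $\Cs$-rational (i.e. locally finite) vectors in $\C[\widehat{\mf{c}}]$ and $\C[(H^2(Y;\C)/W)^{\wedge}]$ respectively. Because $\hat{\alpha}$ is $\Cs$-equivariant, the pullback $\hat{\alpha}^*$ carries rational vectors to rational vectors, so it restricts to a graded $\C$-algebra homomorphism $\C[H^2(Y;\C)/W] \rightarrow \C[\mf{c}]$. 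The corresponding morphism of affine schemes is the desired $\Cs$-equivariant $\alpha$, which is by construction the globalization of $\hat{\alpha}$.

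The second step is to check the fiber product property. By the preceding paragraph of the paper, $\CM(\Gamma)$ is the globalization of $\CM(\Gamma)^{\wedge}$, and by the same argument applied to the fiber product $\mc{Z} := \mf{c} \times_{H^2(Y;\C)/W} \mc{X}$ (which inherits a natural conical $\Cs$-action compatible with those on its factors), $\mc{Z}$ is the globalization of $\mc{Z}^{\wedge} = \widehat{\mf{c}} \times_{(H^2(Y;\C)/W)^{\wedge}} \mc{X}^{\wedge}$. Since $\CM(\Gamma)^{\wedge} \simeq \mc{Z}^{\wedge}$ as $\Cs$-equivariant Poisson formal schemes over $\widehat{\mf{c}}$, taking $\Cs$-rational vectors on both sides yields the required isomorphism $\CM(\Gamma) \simeq \mc{Z}$ as Poisson schemes over $\mf{c}$. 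Uniqueness of $\alpha$ follows: any $\Cs$-equivariant $\alpha'$ with the same fiber product property induces, by formal completion at the fixed point, a $\Cs$-equivariant formal map satisfying the defining property of $\hat{\alpha}$, hence equals $\hat{\alpha}$; passing again to rational vectors forces $\alpha' = \alpha$.

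The main obstacle to make rigorous is the assertion in step two that taking $\Cs$-rational vectors commutes with the completed tensor product defining the formal fiber product, so that the globalization of $\mc{Z}^{\wedge}$ is genuinely $\mc{Z}$. This is essentially the statement that $\C[\mf{c}] \otimes_{\C[H^2(Y;\C)/W]} \C[\mc{X}]$ is the subalgebra of rational vectors in $\C[\widehat{\mf{c}}] \,\hat{\otimes}_{\C[(H^2(Y;\C)/W)^{\wedge}]}\, \C[\mc{X}^{\wedge}]$, which holds in the positively graded/conical setting because each of the algebras involved is generated by its positive-weight part and the $\Cs$-eigenspaces of all three completed algebras coincide with those of the uncompleted ones. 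Once this compatibility is recorded, the lemma falls out immediately from the formal statement.
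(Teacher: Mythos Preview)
Your argument is correct and is exactly the approach the paper intends: the paper simply writes ``This implies'' before stating the lemma, treating it as an immediate consequence of globalizing the formal classifying map $\hat{\alpha}$ via \cite[Appendix]{NamikawaPoissondeformations}, and you have spelled out precisely those details. The only addition you make beyond the paper is the explicit discussion of why globalization commutes with the fiber product, which is a reasonable point to record but is taken for granted in the text.
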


On the other hand, the linear isomorphism (\ref{eq:mcKay}) together with the quotient map $H^2(Y;\C) \rightarrow H^2(Y;\C) / W$ defines a map $\beta : \mf{c} \rightarrow H^2(Y;\C) / W$ which is clearly also the globalization of $\hat{\beta} : \widehat{\mf{c}} \rightarrow   (H^2(Y;\C) / W)^{\wedge}$. Theorem \ref{thm:main} is claiming that $\alpha = \beta$. It suffices instead to show that 
$$
\hat{\alpha} = \hat{\beta} : \widehat{\mf{c}} \rightarrow   (H^2(Y;\C) / W)^{\wedge}. 
$$
This will be our goal for the remainder of the section.

\subsection{Kleinian singularities}  In this section we consider the case $\dim V = 2$, and hence $\Gamma$ is a Kleinian group. As noted in section \ref{sec:uvsp}, associated to $\Gamma$  via the McKay correspondence is a Weyl group $(W,\h)$. Let $Y$ be the minimal resolution of $V / \Gamma$. As in Lemma \ref{lem:twodimHtwo}, we have a natural identification $\h^* \rightarrow H^2(Y;\C)$. Therefore, the formally universal Poisson deformation is a flat family $\mc{X} \rightarrow \h^* / W$. 

Fix a finite group $\Gamma \subset \widetilde{\Xi} \subset N_{SL(2,\C)}(\Gamma)$. Lemma \ref{lem:twodimHtwo} implies that the quotient $\Xi := \widetilde{\Xi} / \Gamma$ acts on $\h^*$ via Dynkin diagram automorphisms. In this case, $\mf{c}$ is the space of all $\Gamma$-equivariant functions $\Gamma \smallsetminus \{ 1 \} \rightarrow \C$, the action of $\Gamma$ on $\C$ being trivial. The group $\Xi$ acts on $\mf{c}$ by $(x \cdot \chi)(s) = \chi(\tilde{x} s \tilde{x}^{-1})$, where $\tilde{x}$ is some lift of $x$ to $\widetilde{\Xi}$. This action extends uniquely to an action of $ \widetilde{\Xi}$ on $\mathbf{H}(\Gamma)$ by algebra automorphisms such that the restriction of this action to $\Gamma$ is just conjugation. The action preserves the spherical subalgebra $e \mathbf{H}(\Gamma) e$, the action of $ \widetilde{\Xi}$ on this subalgebra factoring through $\Xi$. Thus, $\Xi$ acts on $\CM(\Gamma)$ such that the map $\CM(\Gamma) \rightarrow \mf{c}$ is equivariant. Since the action of $\Xi$ on  $e \mathbf{H}(\Gamma) e$ can be extended to the case where $t = 1$ (or more generally a formal variable $t$), $\Xi$ acts on $\CM(\Gamma)$ via Poisson automorphisms. Recall that we have defined in (\ref{eq:mcKay}) an isomorphism $\varpi : \mf{c} \iso \h^*$; this is an $\Xi$-equivariant isomorphism, where $\Xi$ acts on $\h^*$ as defined in section \ref{sec:McKay}.  

\begin{lem}\label{lem:rank2}
The map $\varpi$ extends to a $\Cs$-equivariant isomorphism $\CM(\Gamma) \simeq \mf{c} \times_{\h^* / W} \mc{X}$.  
\end{lem}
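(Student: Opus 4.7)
My plan is to apply the uniqueness from Lemma \ref{lem:classmap} and then identify the resulting classifying map by invoking the classical description, due to Crawley--Boevey--Holland and Etingof--Ginzburg, of the spherical subalgebra at $t=0$ in rank $2$. Specifically, Lemma \ref{lem:classmap} applies verbatim to the two-dimensional case (where $H^2(Y;\C) \simeq \h^*$ by Lemma \ref{lem:twodimHtwo}), producing a unique $\Cs$-equivariant morphism $\alpha : \mf{c} \to \h^*/W$ together with a $\Cs$-equivariant Poisson isomorphism $\CM(\Gamma) \simeq \mf{c} \times_{\h^*/W} \mc{X}$ of $\mf{c}$-schemes. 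The entire content of the lemma therefore reduces to showing $\alpha = \bar{\varpi}$, where $\bar{\varpi}$ is the composition $\mf{c} \xrightarrow{\varpi} \h^* \to \h^*/W$.

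To identify $\alpha$, I would combine two well-known facts. First, Kronheimer's hyperk\"ahler construction (equivalently, Slodowy's theorem realizing the semi-universal deformation of a du Val singularity via the subregular slice in the corresponding simple Lie algebra) provides a flat, $W$-equivariant Poisson deformation $\mc{M} \to \h^*$ of $V/\Gamma$ whose quotient $\mc{M}/W \to \h^*/W$ is the formally universal Poisson deformation $\mc{X} \to \h^*/W$ of $V/\Gamma$. Second, for $\dim V = 2$ the identification of the spherical subalgebra $e\mathbf{H}(\Gamma)e$ at $t=0$ with the coordinate ring of a deformed preprojective algebra yields a $\Cs$-equivariant Poisson isomorphism $\CM(\Gamma) \simeq \mc{M} \times_{\h^*} \mf{c}$ of flat families over $\mf{c}$, where the base change is via $\varpi$. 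Combining these produces the desired $\Cs$-equivariant Poisson isomorphism $\CM(\Gamma) \simeq \mf{c} \times_{\h^*/W} \mc{X}$ extending $\varpi$; the uniqueness in Lemma \ref{lem:classmap} then forces $\alpha = \bar{\varpi}$.

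The hard part will be verifying that the isomorphism $\varpi$ defined in (\ref{eq:mcKay}) is indeed the identification appearing in Etingof--Ginzburg's comparison between the Calogero--Moser and Kronheimer deformations, rather than differing from it by a nontrivial scaling. Both identifications proceed through the McKay bijection $\Irr(\uGamma) \iso \Delta_B$ of (\ref{eq:mckaybij}), so the parametrizations match set-theoretically; a priori they could still differ by a $\Cs$-equivariant scalar on each simple-root direction. However, since both $\mf{c}^*$ and $\h^*$ carry weight-two $\Cs$-actions, any such rescaling is a diagonal linear map, and it is fixed by the Poisson normalization on each side. Tracking conventions on a single simple root suffices to pin this down, and once this is done the $\Cs$-equivariance of the resulting isomorphism is automatic, so the remainder of the argument reduces to a formal consequence of the cited classical results together with the universal property already used in the proof of Lemma \ref{lem:classmap}.
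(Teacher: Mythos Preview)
Your approach is essentially the same as the paper's: both reduce the lemma to the classical identification of the rank-two Calogero--Moser family with (the affinization of) the simultaneous resolution $\widetilde{S}\to\h^*$ of the Slodowy slice, and then read off that the induced base map is $\varpi$. The only real difference is the reference you invoke for this identification. The paper cites Losev's Theorems 6.2.2 and 5.3.1 from \cite{Losev}, which directly produce a $\Cs$-equivariant isomorphism $\CM(\Gamma)\iso\widetilde{S}^{\aff}$ making the square over $\varpi:\mf{c}\iso\h^*$ commute; you instead go through the Crawley--Boevey--Holland / Etingof--Ginzburg route via deformed preprojective algebras and Kronheimer's construction. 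Either path works, but the advantage of citing Losev is precisely that the normalization issue you flag as ``the hard part'' is already packaged into his result: the map on parameter spaces is given there explicitly in terms of characters of $\uGamma$, matching the formula (\ref{eq:mcKay}) on the nose. Your argument that any discrepancy could only be a diagonal rescaling fixed by the Poisson normalization is plausible but would need to be carried out; the paper simply sidesteps this by choosing a reference in which the comparison of parameters has already been done.

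One minor point of phrasing: you write that $\mc{M}/W\to\h^*/W$ is the universal deformation $\mc{X}$, but what you actually want is that $\mc{M}$ (your $\widetilde{S}^{\aff}$) is the base change of $\mc{X}$ along $\h^*\to\h^*/W$, which is how the paper states it. These agree because $W$ acts freely on the generic fiber, but the statement as written is slightly imprecise.
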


\begin{proof}
Let $\mf{g}$ be the simple Lie algebra associated to $\Gamma$ under the McKay correspondence. It is well-known, e.g. \cite[Proposition 3.1 (1)]{Namikawa}, that a Slodowy slice $S \rightarrow \h^* / W$ to the subregular nilpotent orbit in $\mf{g}$ is the formally universal Poisson deformation of the Kleinian singularity $V / \Gamma$. Let $\widetilde{S} \rightarrow \h^*$ be the resolution of the formally universal deformation $S \rightarrow \h^* / W$ of $V / \Gamma$ coming from taking the preimage of $S$ in Grothendieck's simultaneous resolution of $\g^*$. By \cite[Proposition 6.2]{SlodowyNamLehnSorger}, $\widetilde{S} \rightarrow \h^*$ is the formally universal Poisson deformation of the minimal resolution of $V / \Gamma$.  

Theorems 6.2.2 and 5.3.1 of \cite{Losev} imply that there is an isomorphism $\CM(\Gamma) \iso \widetilde{S}^{\aff}$ such that the following diagram commutes
$$
\begin{tikzpicture} 
\node (C1) at (-1.2,1.2) {$\CM(\Gamma)$};
\node (C2) at (1.2,1.2) {$\widetilde{S}^{\aff}$};
\node (C3) at (-1.2,0) {$\mf{c}$};
\node (C4) at (1.2,0) {$\h^*$};
\draw[->] (C1) -- (C2); 
\draw[->] (C1) -- (C3); 
\draw[->] (C2) -- (C4); 
\draw[->] (C3) -- (C4);
\node [above] at (0,0) {$\varpi$};
\node [above] at (0,1.2) {$\sim$};
\end{tikzpicture}
$$
This implies the statement of the lemma. 
\end{proof}

\subsection{Factorization of the Calogero-Moser space}\label{sec:factor}

Fix $B \in \mc{B}$, $\uGamma$ the corresponding minimal parabolic subgroup and $\mc{L}$ the symplectic leaf in $V / \Gamma$ labeled by $B$. Let $\mf{d} \simeq \h^*_B$ be the base of the Calogero-Moser deformation of $\uGamma$ so that $\mathbf{H}(\uGamma)$ is a $\C[\mf{d}]$-algebra. For clarity, we write $\mathbf{H}_{\mf{d}}(\uGamma) := \mathbf{H}(\uGamma)$ to show the dependence on $\mf{d}$. As explained in section \ref{sec:SRA}, the space $\mf{c}_B = \mf{d}^{\Xi(B)}$ is a subspace and projection followed by inclusion defines a linear map $\mf{c} \rightarrow \mf{d}$. Let $\mathbf{H}_{\mf{c}}(\uGamma) := \C[\mf{c}] \otimes_{\C[\mf{d}]} \mathbf{H}_{\mf{d}}(\uGamma)$ denote the symplectic reflection algebra obtained from $\mathbf{H}(\uGamma)$ by base change from $\mf{d}$ to $\mf{c}$. Choose $p \in \mc{L} \subset V / \Gamma$. We may think of $p$ as a $\Gamma$-orbit in $V$. If $I_p$ is the ideal of functions in $\C[V]$ vanishing on this orbit, then $I_p \rtimes \Gamma$ is a two-sided ideal in $\C[V] \rtimes \Gamma$. Recall that  $\C[V] \rtimes \Gamma$ is the quotient of $\mathbf{H}(\Gamma)$ by the ideal generated by $\C[\mf{c}]_+$. Following Losev, we denote by $\mathbf{H}(\Gamma)^{\wedge p}$ the completion of $\mathbf{H}(\Gamma)$ by the preimage of the $I_p \rtimes \Gamma$ under the quotient map. Since the preimage of $I_p \rtimes \Gamma$ in $\mathbf{H}(\Gamma)$  contains $\C[\mf{c}]_+$, the completion $\mathbf{H}(\Gamma)^{\wedge p}$ is a topological $\C[[\mf{c}]]$-algebra. Similarly, $ \mathbf{H}_{\mf{c}}(\uGamma)^{\wedge 0}$ is the completion of $ \mathbf{H}_{\mf{c}}(\uGamma)$ corresponding to the ideal $\C[V_0]_+ \rtimes \uGamma$ of $\C[V_0] \rtimes \uGamma$. The key result \cite[Theorem 2.5.3]{LosevSRAComplete} says 

\begin{thm}\label{thm:Losev}
There is an isomorphism 
$$
\theta^* : \mathbf{H}(\Gamma)^{\wedge p} \rightarrow \mathrm{Mat}_{|\Gamma / \uGamma|} \left( \mathbf{H}_{\mf{c}}(\uGamma)^{\wedge 0} \widehat{\o} \C[V^{\uGamma}] \right)
$$
of topological $\C[[\mf{c}]]$-algebras. 
\end{thm}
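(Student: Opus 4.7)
The plan is to follow the strategy of Bezrukavnikov--Etingof, extended to the setting of symplectic reflection algebras by Losev in \cite{LosevSRAComplete}. The proof naturally breaks into three stages: a classical (undeformed) isomorphism at $\bc = 0$, a deformation-theoretic lift to the flat family over $\C[[\mf{c}]]$, and a final identification of the resulting parameter map $\mf{c} \to \mf{d}$.

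First I would establish the isomorphism at the level of $\C[V] \rtimes \Gamma$. The preimage of $p$ in $V$ is a single $\Gamma$-orbit $\Gamma \cdot v$ of size $n := |\Gamma/\uGamma|$. Choosing coset representatives for $\Gamma/\uGamma$ and invoking the standard Morita-type decomposition of a skew group ring over a finite orbit with uniform stabilizer yields
$$
(\C[V] \rtimes \Gamma)^{\wedge p} \;\cong\; \mathrm{Mat}_{n}\bigl( (\C[V])^{\wedge v} \rtimes \uGamma \bigr).
$$
The $\uGamma$-module decomposition $V = V^{\uGamma} \oplus V_0$, combined with the triviality of the $\uGamma$-action on $V^{\uGamma}$, rewrites the right-hand side (with appropriately interpreted completions in each tensor factor) as $\mathrm{Mat}_n\bigl(\C[V_0]^{\wedge 0} \rtimes \uGamma \,\widehat{\o}\, \C[V^{\uGamma}]\bigr)$, which is the $\bc = 0$ fibre of the desired isomorphism.

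Next I would lift this to the flat family over $\C[[\mf{c}]]$. Since $\mathbf{H}(\Gamma)$ is a PBW deformation of $\C[V] \rtimes \Gamma$, the completion $\mathbf{H}(\Gamma)^{\wedge p}$ is a flat $\C[[\mf{c}]]$-deformation of the algebra just computed. By Morita invariance of Hochschild cohomology, every formal deformation of $\mathrm{Mat}_n(A)$ takes the form $\mathrm{Mat}_n(\widetilde{A})$ for a flat deformation $\widetilde{A}$ of $A$; moreover, the smooth formal disk along $V^{\uGamma}$ contributes nothing to equivariant $HH^2$ (as $\uGamma$ acts trivially there), so no new parameters enter from that factor. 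Therefore $\mathbf{H}(\Gamma)^{\wedge p}$ is forced to take the form $\mathrm{Mat}_n\bigl(\widetilde{B} \,\widehat{\o}\, \C[V^{\uGamma}]\bigr)$, where $\widetilde{B}$ is the pull-back of $\mathbf{H}_{\mf{d}}(\uGamma)^{\wedge 0}$ along some linear map $\alpha : \mf{c} \to \mf{d}$ arising from a universality argument for the rank-two symplectic reflection algebra attached to $\uGamma$.

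The main obstacle is the last step: identifying $\alpha$ with the composition $\mf{c} \twoheadrightarrow \mf{c}_B \cong \mf{d}^{\Xi(B)} \hookrightarrow \mf{d}$ introduced in section \ref{sec:SRA}. I would accomplish this by tracking the defining commutator relations through the completion isomorphism: for $x, y \in V^*$, the commutator $[x,y]$ in $\mathbf{H}(\Gamma)$ is a $\bc$-weighted sum over $s \in \mc{S}$; inside each diagonal block of $\mathrm{Mat}_n$, only those $s$ conjugate into $\uGamma$ survive, and by Lemma \ref{lem:bij1} their contributions are governed precisely by the values of $\varpi(\bc)$ on the $\widetilde{\Xi}(B)$-orbits in $\uGamma \smallsetminus \{1\}$. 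Comparing this with the defining relations of $\mathbf{H}_{\mf{d}}(\uGamma)$, and invoking the $\Xi(B)$-equivariance of $\varpi$ to see that the image lies in $\mf{d}^{\Xi(B)} = \mf{c}_B$, will yield the claimed pull-back identification and complete the proof.
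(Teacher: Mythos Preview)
The paper does not supply a proof of this statement: it is quoted verbatim as \cite[Theorem 2.5.3]{LosevSRAComplete} and used as a black box (``The key result \cite[Theorem 2.5.3]{LosevSRAComplete} says\ldots''). So there is no in-paper argument to compare your sketch against.

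That said, your outline is a faithful summary of the Bezrukavnikov--Etingof/Losev strategy that the citation encapsulates: the $\bc=0$ Morita decomposition over the orbit, the deformation-theoretic lift using Morita invariance of Hochschild cohomology and the triviality of the $V^{\uGamma}$ factor, and the identification of the induced parameter map with the restriction $\mf{c} \twoheadrightarrow \mf{c}_B \hookrightarrow \mf{d}$ via the commutation relations. The only place to be careful is your step~2: the assertion that the $V^{\uGamma}$ factor ``contributes nothing'' and that the remaining deformation is classified by $\mf{d}$ hides the genuine work in Losev's paper (the formal Darboux-type splitting and the computation that pins down $HH^2$ of the completed skew group ring), so if you were writing this out in full you would need to invoke or reprove those lemmas rather than treat them as obvious. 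For the purposes of this paper, however, citing Losev is exactly what the author does.
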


Let $e$ and ${e'}$ denote the trivial idempotents in the group algebras of $\Gamma$ and $\uGamma$ respectively, so that $\mathsf{CM}_{\mf{c}}(\uGamma) = \Spec {e'}  \mathbf{H}_{\mf{c}}(\uGamma) {e'} $ is a Poisson variety over $\mf{c}$. Applying the idempotent $e$ to both sides of the isomorphism $\theta^*$ of Theorem \ref{thm:Losev} gives an isomorphism $e (\mathbf{H}(\Gamma)^{\wedge p}) e \rightarrow {e'} ( \mathbf{H}_{\mf{c}}(\uGamma)^{\wedge 0}) {e'} \ \widehat{\o} \ \C[V^{\uGamma}]$; see \cite[Section 2.3]{LosevSRAComplete}. The isomorphism $\theta^*$ of Theorem \ref{thm:Losev} is actually valid for any $t$. This implies that the isomorphism $e( \mathbf{H}(\Gamma)^{\wedge p}) e \rightarrow {e'} ( \mathbf{H}_{\mf{c}}(\uGamma)^{\wedge 0}) {e'} \ \widehat{\o} \ \C[V^{\uGamma}]$ is an isomorphism of \textit{Poisson} algebras.

\begin{cor}\label{cor:Losev}
There is an isomorphism of formal Poisson schemes
$$
\theta : \CM(\Gamma)^{\wedge p} \rightarrow \mathsf{CM}_{\mf{c}}(\uGamma)^{\wedge 0} \times V^{\uGamma} 
$$
over $\widehat{\mf{c}}$. 
\end{cor}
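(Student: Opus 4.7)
The plan is to deduce the corollary by taking formal spectra of the Poisson algebra isomorphism
$$
e\mathbf{H}(\Gamma)^{\wedge p}e \;\stackrel{\sim}{\longrightarrow}\; {e'}\mathbf{H}_{\mf{c}}(\uGamma)^{\wedge 0}{e'} \;\widehat{\o}\; \C[V^{\uGamma}]
$$
exhibited in the paragraph immediately preceding the statement, which is obtained by compressing both sides of Theorem \ref{thm:Losev} by the trivial idempotent $e \in \C\Gamma$. All that remains is to interpret each side geometrically and to verify the Poisson claim.

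Since compression by the idempotent $e$ commutes with completion along the two-sided ideal generated by the preimage of $I_p \rtimes \Gamma$, the left-hand side $e\mathbf{H}(\Gamma)^{\wedge p}e$ is the completion of the spherical subalgebra $e\mathbf{H}(\Gamma)e$ at the maximal ideal defining the $\Gamma$-orbit $p$; its formal spectrum is by definition $\CM(\Gamma)^{\wedge p}$. The same reasoning identifies ${e'}\mathbf{H}_{\mf{c}}(\uGamma)^{\wedge 0}{e'}$ with the completion of $\C[\CM_{\mf{c}}(\uGamma)]$ at the ideal of $0 \in V_0/\uGamma$, whose formal spectrum is $\CM_{\mf{c}}(\uGamma)^{\wedge 0}$. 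The tensor factor $\C[V^{\uGamma}]$ contributes the affine factor $V^{\uGamma}$, and the product over $\widehat{\mf{c}}$ matches the $\C[[\mf{c}]]$-linear structure of the completed tensor product.

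The main point, and the only one requiring genuine care, is Poisson compatibility. Here I would invoke the observation already recorded in the excerpt: Theorem \ref{thm:Losev} is valid for the full one-parameter family $\mathbf{H}_t(\Gamma)$, not merely at $t = 0$. Since the Poisson bracket on each spherical subalgebra at $t = 0$ is recovered as $\{a,b\} = \lim_{t \to 0} t^{-1}[\tilde a,\tilde b]$ using lifts $\tilde a, \tilde b$ to the $t$-family, and since $\theta^*$ is a $\C[t]$-algebra isomorphism intertwining the two $t$-families, the compressed isomorphism at $t = 0$ automatically respects Poisson brackets on both sides. Dualising then yields the desired isomorphism of formal Poisson schemes over $\widehat{\mf{c}}$. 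The main obstacle is really Theorem \ref{thm:Losev} itself; granted that result, the corollary is essentially a translation from algebra to geometry.
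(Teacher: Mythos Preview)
Your proposal is correct and follows exactly the argument the paper gives in the paragraph preceding the corollary: compress Theorem~\ref{thm:Losev} by the trivial idempotent to obtain the spherical isomorphism, then use validity for all $t$ to deduce Poisson compatibility. The only addition you make is spelling out the standard $\{a,b\} = \lim_{t\to 0} t^{-1}[\tilde a,\tilde b]$ reasoning behind the Poisson step, which the paper leaves implicit.
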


\subsection{} Recall that Lemma \ref{lem:classmap} says that there is a $\Cs$-equivariant morphism $\alpha : \mf{c} \rightarrow H^2(Y;\C) / W$ such that $\CM(\Gamma) \simeq \mf{c} \times_{H^2(Y;\C) / W}  \mc{X}$. Completing at $0 \in \mf{c}$ and $o \in H^2(Y,\C) / W$, we have a Cartesian square
$$
\begin{tikzpicture} 
\node (C1) at (-1.5,1.5) {$\CM(\Gamma)^{\wedge}$};
\node (C2) at (1.5,1.5) {$\mc{X}^{\wedge}$};
\node (C3) at (-1.5,0) {$\widehat{\mf{c}}$};
\node (C4) at (1.5,0) {$(H^2(Y;\C) / W)^{\wedge}$};
\draw[->] (C1) -- (C2); 
\draw[->] (C1) -- (C3); 
\draw[->] (C2) -- (C4); 
\draw[->] (C3) -- (C4);
\node [above] at (-0.6,0) {$\hat{\alpha}$};
\end{tikzpicture}
$$
such that $\alpha$ is the algebraization of $\hat{\alpha}$. Here $\CM(\Gamma)^{\wedge}$ and $\mc{X}^{\wedge}$ are the completions of $\CM(\Gamma)$ and $\mc{X}$ respectively along the special fiber $V / \Gamma$. Choose some $B \in \mc{B}$ and consider $\uGamma := \Gamma_B$, $\mc{L}_B$ etc. Fix $p \in \mc{L}_B$. Completing $\CM(\Gamma)^{\wedge}$ at $p$, the above diagram becomes the Cartesian square
\begin{equation}\label{eq:carA}
\begin{tikzpicture} 
\node (C1) at (-1.5,1.5) {$\CM(\Gamma)^{\wedge p}$};
\node (C2) at (1.5,1.5) {$\mc{X}^{\wedge p}$};
\node (C3) at (-1.5,0) {$\widehat{\mf{c}}$};
\node (C4) at (1.5,0) {$(H^2(Y;\C) / W)^{\wedge}$};
\draw[->] (C1) -- (C2); 
\draw[->] (C1) -- (C3); 
\draw[->] (C2) -- (C4); 
\draw[->] (C3) -- (C4);
\node [above] at (-0.6,0) {$\hat{\alpha}$};
\end{tikzpicture}
\end{equation}

Recall that Theorem \ref{prop:Weylgroup} says that $H^2(Y;\C) / W$ is isomorphic to $\prod_{B \in \mc{B}} \mf{c}_B / W_B$. The projection map $H^2(Y;\C) / W \rightarrow \mf{c}_B / W_B$, followed by the canonical morphism $\mf{c}_B / W_B \rightarrow \h_B^* / W(B)$ is denoted $r_B$, and its completion at $0$ is $\hat{r}_B$. Let $\mc{X}_0(\uGamma)$ denote the formally universal Poisson deformation of $V_0 / \uGamma$. The completion of $\mc{X}_0(\uGamma)$ at $o \in V_0 / \uGamma \subset \mc{X}_0(\uGamma)$ is denoted $\mc{X}_0(\uGamma)^{\wedge 0}$.

\begin{lem}\label{lem:Q2}
Let $p \in \mc{L}_B$ and $\mc{X}^{\wedge p}$ the completion of $\mc{X}$ at $p$. Then the following commutative diagram 
\begin{equation}\label{eq:carB}
\begin{tikzpicture} 
\node (C1) at (-2,1.5) {$\mc{X}^{\wedge p}$};
\node (C2) at (2,1.5) {$\mc{X}_0(\uGamma)^{\wedge 0} \times (V^{\uGamma})^{\wedge 0}$};
\node (C3) at (-2,0) {$(H^2(Y;\C) / W)^{\wedge}$};
\node (C4) at (2,0) {$(\h_B^* / W(B))^{\wedge}$};
\draw[->] (C1) -- (C2); 
\draw[->] (C1) -- (C3); 
\draw[->] (C2) -- (C4); 
\draw[->] (C3) -- (C4);
\node [above] at (0.2,0) {$\hat{r}_B$};
\end{tikzpicture}
\end{equation}
is Cartesian.
\end{lem}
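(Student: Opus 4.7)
The plan is a three-step argument. First, I would establish a formal product decomposition of $V/\Gamma$ at $p$. Second, I would lift this decomposition to the formally universal Poisson deformation by means of formal universality and the triviality of Poisson deformations of smooth symplectic varieties. Third, I would identify the resulting classifying map with $\hat{r}_B$ using the construction of Namikawa's Weyl group from Proposition \ref{prop:H2iso}.

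For the first step, recall from Section \ref{sec:McKay} that $\sigma: U \times V_0/\uGamma \to V/\Gamma$ is an \'etale Galois cover onto its image, with $\Xi(B)$ acting freely. Since $p$ corresponds to $(b,0)$ and $\Xi(B)$ acts freely there, passing to formal completions yields a $\Cs$-equivariant isomorphism of formal Poisson schemes
$$
(V/\Gamma)^{\wedge p} \;\simeq\; (V_0/\uGamma)^{\wedge 0} \,\times\, (V^{\uGamma})^{\wedge 0},
$$
where the second factor is a formal symplectic polydisc (using that $U \subset V^{\uGamma}$ is open, so $U^{\wedge b} = (V^{\uGamma})^{\wedge 0}$).

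For the second step, the smooth symplectic factor $(V^{\uGamma})^{\wedge 0}$ admits no nontrivial formal Poisson deformations (it is formally Darboux), so a K\"unneth-type argument in Poisson cohomology -- as in \cite[Section 5]{Namikawa} -- shows that the formally universal Poisson deformation of the right-hand side above is precisely $\mc{X}_0(\uGamma)^{\wedge 0} \times (V^{\uGamma})^{\wedge 0}$ over $(\h_B^*/W(B))^{\wedge}$, where the second factor carries the trivial deformation. Applying formal universality to $\mc{X}^{\wedge p} \to (H^2(Y;\C)/W)^{\wedge}$, which is itself a formal Poisson deformation of the product, we obtain a unique classifying morphism
$$
\varphi_B : (H^2(Y;\C)/W)^{\wedge} \;\longrightarrow\; (\h_B^*/W(B))^{\wedge}
$$
together with a Cartesian square of the desired form, showing (\ref{eq:carB}) is Cartesian with bottom arrow $\varphi_B$.

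The third step is to identify $\varphi_B$ with $\hat{r}_B$. By Theorem \ref{prop:Weylgroup} and Proposition \ref{prop:H2iso}, the decomposition $H^2(Y;\C) \simeq \bigoplus_B H^2(Y_B;\C)^{\Xi(B)}$ is given by the restriction maps along the local slices $Y_B = \rho^{-1}(V_0/\uGamma)$, and the factor indexed by $B$ is exactly the $\Xi(B)$-invariant part of the second cohomology of the minimal resolution of the local Kleinian singularity. Since the construction of Namikawa's Weyl group in \cite{Namikawa2} defines the $B$-component as the parameter space of the local Kleinian Poisson deformation induced by $\mc{X}$ along $\mc{L}_B$ (this is precisely the content of Lemma 1.2 of \emph{loc.\ cit.}), the classifying map $\varphi_B$ produced by formal universality must coincide with $\hat{r}_B$. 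The main obstacle is Step 2: pushing the product structure from the special fiber to the full formal Poisson deformation; the identification in Step 3 is then essentially definitional, once one unwinds how Namikawa's decomposition of $H^2(Y;\C)$ into Weyl-group components is built from exactly these local Kleinian slices.
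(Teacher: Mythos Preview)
Your proposal is correct and follows essentially the same three-step strategy as the paper: local product decomposition at $p$, identification of the universal Poisson deformation of the product via rigidity of the smooth symplectic factor, and identification of the resulting classifying map with $\hat r_B$ by appealing to Namikawa's construction in \cite{Namikawa2}. The only minor difference is that the paper carries out Steps 1--2 in the analytic category (using the generalized Darboux theorem \cite[Lemma 1.3]{Namikawa} and \cite[Proposition 3.1]{Namikawa} for the deformation of the product) before passing to formal completions, whereas you work formally throughout using the \'etale cover $\sigma$ from Section~\ref{sec:McKay}; both routes yield the same Cartesian square.
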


\begin{proof}
The analytic germ of $0$ in $H^2(Y,\C) / W$, resp. in $\h_B^* / W(B)$, is denoted $\mathrm{PDef}(V / \Gamma)$, resp. $\mathrm{PDef}(V / \uGamma)$. They are the Poisson Kuranishi spaces of the corresponding analytic symplectic varieties. Passing to the analytic topology, the formally universal deformation $\mc{X} \rightarrow H^2(Y;\C) / W$ induces a flat Poisson deformation $\mc{X}^{\an} \rightarrow (H^2(Y;\C) / W)^{\an}$. Restricting to the germ of $o$ in $H^2(Y;\C) / W$, we have a flat Poisson deformation $\mc{X}^{\an} \rightarrow \mathrm{PDef}(V / \Gamma)$. 

Passing to the germ of $p \in (V / \Gamma)^{\an} \subset \mc{X}^{\an}$ gives a flat family $(\mc{X}^{\an}, p) \rightarrow \mathrm{PDef}(V / \Gamma)$. This is a deformation of $((V / \Gamma)^{\an}, p)$. By the generalized Darboux Theorem, \cite[Lemma 1.3]{Namikawa}, we have an isomorphism of symplectic varieties 
$$
((V / \Gamma)^{\an}, p) \simeq ((V_0 / \uGamma \times V^{\uGamma})^{\an}, 0).
$$
Moreover, by \cite[Proposition 3.1]{Namikawa}, the universal Poisson deformation of $((V_0 / \uGamma)^{\an}, 0) \times ( (V^{\uGamma})^{\an}, 0)$ is $((\mc{X}_0(\uGamma) \times V^{\uGamma})^{\an}, 0) \rightarrow \mathrm{PDef}(V_0 / \uGamma)$. Hence there exists a holomorphic map $\phi_B : \mathrm{PDef}(V / \Gamma) \rightarrow \mathrm{PDef}(V_0 / \uGamma)$ such that the following diagram is Cartesian
$$
\begin{tikzpicture} 
\node (C1) at (-2,1.5) {$(\mc{X}^{\an}, p)$};
\node (C2) at (2,1.5) {$((\mc{X}_0(\uGamma) \times V^{\uGamma})^{\an}, 0)$};
\node (C3) at (-2,0) {$\mathrm{PDef}(V / \Gamma)$};
\node (C4) at (2,0) {$ \mathrm{PDef}(V_0 / \uGamma)$};
\draw[->] (C1) -- (C2); 
\draw[->] (C1) -- (C3); 
\draw[->] (C2) -- (C4); 
\draw[->] (C3) -- (C4);
\node [above] at (0,0) {$\phi_B$};
\end{tikzpicture}
$$
The map $\phi_B$ is precisely the map constructed in section 4 (i) of the proof of \cite[Theorem 1.1]{Namikawa2}. As explained in section 4 of the proof of of \cite[Theorem 1.1]{Namikawa2}, the completion of $\phi_B$ equals $\hat{r}_B$. Passing to the formal neighborhood of $p$ in $(\mc{X}^{\an}, p)$ and $0$ in $((\mc{X}_0(\uGamma) \times V^{\uGamma})^{\an}, 0)$, we get the Cartesian square stated in the lemma.   
\end{proof}

\subsection{The proof of Theorem \ref{thm:main}} By Corollary \ref{cor:Losev}, we have an isomorphism of Poisson varieties $\CM(\Gamma)^{\wedge p} \simeq \mathsf{CM}_{\mf{c}}(\uGamma)^{\wedge 0} \times V^{\uGamma} $ over $\widehat{\mf{c}}$. Under the identification (\ref{eq:mcKay}), we have a natural decomposition $\mf{c} = \bigoplus_{B \in \mc{B}} \mf{c}_B$. Therefore, if we write $q_B$ for projection from $\mf{c}$ onto $\mf{c}_B$, the square 
\begin{equation}\label{eq:carC}
\begin{tikzpicture} 
\node (C1) at (-1.5,1.5) {$\mathsf{CM}_{\mf{c}}(\uGamma)^{\wedge 0} \times V^{\uGamma}$};
\node (C2) at (1.5,1.5) {$\CM(\Gamma)^{\wedge p}$};
\node (C3) at (-1.5,0) {$\widehat{\mf{c}}_B$};
\node (C4) at (1.5,0) {$\widehat{\mf{c}}$};
\draw[->] (C2) -- (C1); 
\draw[->] (C1) -- (C3); 
\draw[->] (C2) -- (C4); 
\draw[->] (C4) -- (C3);
\node [above] at (0,0) {$\hat{q}_B$};
\end{tikzpicture}
\end{equation}
is also (trivially) Cartesian. Recall from section \ref{sec:factor} that $\mf{d}$ is the natural parameter space associated to the symplectic reflection algebra $\mathbf{H}_{\mf{d}}(\uGamma)$ and $\mf{c}_B = \mf{d}^{\Xi(B)}$. Let $\alpha_B$ be the composite $\mf{c}_B \hookrightarrow \mf{d} \iso \h^*_B \rightarrow \h^*_B / W(B)$ and $\hat{\alpha}_B$ will be its completion at $0$. Lemma \ref{lem:rank2} implies that the following diagram is Cartesian 
\begin{equation}\label{eq:carD}
\begin{tikzpicture} 
\node (C1) at (-1.5,1.5) {$\mc{X}_0(\uGamma)^{\wedge 0} \times V^{\uGamma}$};
\node (C2) at (1.5,1.5) {$\CM_{\mf{c}}(\uGamma)^{\wedge 0} \times V^{\uGamma}$};
\node (C3) at (-1.5,0) {$(\h_B / W(B))^{\wedge}$};
\node (C4) at (1.5,0) {$ \widehat{\mf{c}}_B$};
\draw[->] (C2) -- (C1); 
\draw[->] (C1) -- (C3); 
\draw[->] (C2) -- (C4); 
\draw[->] (C4) -- (C3);
\node [above] at (0.5,0) {$\hat{\alpha}_B$};
\end{tikzpicture}
\end{equation}
The composite of the two bottom horizontal arrows is denoted $\hat{\alpha}_B$. By Lemma \ref{lem:rank2}, $\hat{\alpha}_B = \hat{\beta}_B$. Combining diagrams (\ref{eq:carA}), (\ref{eq:carB}), (\ref{eq:carC}) and (\ref{eq:carD}), we get the following diagram, where each square is Cartesian.
$$
\begin{tikzpicture} 
\node (C1) at (-6,1.5) {$\mc{X}_0(\uGamma)^{\wedge} \times V^{\uGamma}$};
\node (C2) at (-3,1.5) {$\CM_{\mf{c}}(\uGamma)^{\wedge 0} \times V^{\uGamma}$};
\node (C3) at (0,1.5) {$\CM(\Gamma)^{\wedge p}$};
\node (C4) at (2.5,1.5) {$\mc{X}^{\wedge p}$};
\node (C5) at (5.7,1.5) {$\mc{X}_0(\uGamma)^{\wedge 0} \times V^{\uGamma}$};
\node (C6) at (-6,0) {$(\h_B / W(B))^{\wedge}$};
\node (C7) at (-3,0) {$\widehat{\mf{c}}_B$};
\node (C8) at (0,0) {$\widehat{\mf{c}}$};
\node (C9) at (2.5,0) {$(H^2(Y,\C) / W)^{\wedge}$};
\node (C10) at (5.7,0) {$(\h_B^* / W(B) )^{\wedge}$};
\draw[->] (C2) -- (C1); 
\draw[->] (C3) -- (C2); 
\draw[->] (C3) -- (C4); 
\draw[->] (C4) -- (C5);
\draw[->] (C1) -- (C6);  
\draw[->] (C2) -- (C7);
\draw[->] (C3) -- (C8); 
\draw[->] (C4) -- (C9);  
\draw[->] (C5) -- (C10);
\draw[->] (C7) -- (C6);
\draw[->] (C8) -- (C7);  
\draw[->] (C8) -- (C9);
\draw[->] (C9) -- (C10);
\node [above] at (-4,0) {$\hat{\alpha}_B$};
\node [above] at (-1.4,0) {$\hat{q}_B$};
\node [above] at (0.7,0) {$\hat{\alpha}$};
\node [above] at (4.25,0) {$\hat{r}_B$};
\end{tikzpicture}
$$
The universality of the formal Poisson deformation $\mc{X}_0(\uGamma)^{\wedge} \times V^{\uGamma} \rightarrow (\h_B^* / W(B))^{\wedge}$ of $V / \uGamma = V_0/ \uGamma \times V^{\uGamma}$ implies that $\hat{\alpha}_B  \circ \hat{q}_B = \hat{p}_B \circ \hat{\alpha}$, c.f. \cite[Section 1.3]{GK}. Hence
$$
\bigoplus_{B \in \mc{B}} \hat{\alpha}_B  \circ \hat{q}_B = \bigoplus_{B \in \mc{B}} \hat{r}_B \circ \hat{\alpha} = \hat{\alpha} 
$$
since $\bigoplus_{B \in \mc{B}} \hat{r}_B = \mathrm{id}$. On the other hand, it is clear from the explicit definition of $\hat{\beta}$ that $\hat{\beta} = \bigoplus_{B \in \mc{B}}  \hat{\alpha}_B  \circ \hat{q}_B$. This completes the proof of Theorem \ref{thm:main}.

\subsection{}\label{sec:BR}

We turn to the proof of Corollary \ref{cor:singCal}. If the quotient $V / \Gamma$ admits a smooth projective, symplectic resolution then by Theorem \ref{thm:main} and the main theorem of \cite{Namikawa3}, the set of points in $\mf{c}$ for which $\CM_{\bc}(\Gamma)$ is singular is a union of hyperplanes. If $V / \Gamma$ does not admit a symplectic resolution then, by \cite[Corollary 1.21]{GK},  the space $\CM_{\bc}(\Gamma)$ is always singular. 

\section{Counting resolutions}

In this section we deduce Theorem \ref{thm:count} from Theorem \ref{thm:main}, using the main theorem from \cite{Namikawa3}. 

\subsection{}\label{sec:proof1} We recall the main result from \cite{Namikawa3}. As explained in \textit{loc. cit.}, the birational geometry of $Y$ is controlled by the real space $H^2(Y;\R)$. In particular, a key role is played by the movable cone $\mathrm{Mov}(\rho) \subset H^2(Y;\R)$ of $\rho : Y \rightarrow V / \Gamma$; see \textit{loc. cit.} for the definition. The ample cone of $\rho$ in $H^2(Y;\C)$ is denoted $\mathrm{Amp}(\rho)$. By Theorem \ref{thm:main}, we have a projective morphism $\mc{Y} \rightarrow \CM(\Gamma)$ over $\mf{c} \simeq H^2(Y;\C)$. As explained in \cite{Namikawa3}, the set $\mc{D} \subset \mf{c}$ defined in the introduction corresponds to the closed subset $\mc{D}' \subset H^2(Y;\C)$ consisting of all points $t$ such that the fiber $\mc{Y}_t := \boldsymbol{\nu}^{-1}(t)$ is not affine. 

\begin{thm}[Main Theorem, \cite{Namikawa3}]\label{thm:Nam3main}
\begin{enumerate}
\item There are finitely many hyperplanes $\{H_i \}_{i \in I}$ in $H^2(Y;\Q)$ such that $\mc{D}' = \bigcup_{i \in I} (H_i)_{\C}$. 
\item There are only finitely many $\Q$-factorial terminalizations $\{ \rho_k : Y_k \rightarrow V / \Gamma \}_{k \in K}$ of $V / \Gamma$. 
\item The set of open chambers determined by the real hyperplanes $\{ (H_i)_{\R} \}_{i \in I}$ coincides with the set $\{ w(\mathrm{Amp}(\rho_k)) \}$, where $w \in W$ and $k \in K$. 
\end{enumerate}
\end{thm}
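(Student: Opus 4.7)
The plan is to analyse the formally universal Poisson deformation $\boldsymbol{\nu} : \mc{Y} \to H^2(Y;\C)$ of $Y$ constructed in \cite{Namikawa} and to determine how affineness of its fibres varies across the base. Since the statement is quoted from \cite{Namikawa3}, the proof in the present paper will presumably amount to a reference; below I sketch the strategy I would expect in \textit{loc.\ cit.}

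For part (1), I would first argue that $\mc{Y}_t$ fails to be affine precisely when the affinisation map $\mc{Y}_t \to \mc{Y}_t^{\aff}$ contracts some curve. For each numerical class $[C]$ of an extremal flopping curve on $Y$, the class $[C]$ lifts flatly through $\boldsymbol{\nu}$, and the condition that $C$ remain contracted in the fibre over $t$ amounts to a single linear equation $\langle t, [C] \rangle = 0$ on $H^2(Y;\C)$, cutting out a rational hyperplane $H_{[C]}$. Finiteness of the resulting arrangement follows from the finiteness of the extremal rays of $\mathrm{Mov}(\rho)$. Part (2) then drops out: $\mathrm{Mov}(\rho)$ decomposes into finitely many ample cones $\mathrm{Amp}(\rho_k)$, one for each $\Q$-factorial terminalization, and these are precisely the chambers of the arrangement that lie inside $\mathrm{Mov}(\rho)$.

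For part (3), I would run a relative minimal model programme on $\mc{Y}$. Polarising a fibre $\mc{Y}_t$ by any class in an open chamber and running MMP produces a $\Q$-factorial model $\rho_k$ whose ample cone contains that chamber; passing to the central fibre yields a terminalization of $V/\Gamma$. The Namikawa Weyl group $W$ then enters through the monodromy around the codimension-two symplectic leaves of $V/\Gamma$ (as described by Theorem \ref{prop:Weylgroup}): $W$ acts on $H^2(Y;\C)$ preserving the hyperplane arrangement, and two chambers in the same $W$-orbit yield the same terminalization up to isomorphism. Thus the complete set of chambers is exhausted by $\{w(\mathrm{Amp}(\rho_k)) : w \in W,\ k \in K\}$.

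The hardest step will be establishing the linearity and rationality claimed in part (1): showing that the walls are honest rational hyperplanes rather than merely codimension-one analytic subvarieties. This requires combining the integral structure on $H^2(Y;\Z)$ with a period-map argument in the spirit of \cite{KaledinDynkinArxiv}, using that the period map for $\boldsymbol{\nu}$ is a local isomorphism, so that a condition linear in the period translates to a linear condition on the base $H^2(Y;\C)$.
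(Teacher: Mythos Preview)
You are correct that the paper does not prove this theorem: it is stated with the attribution ``Main Theorem, \cite{Namikawa3}'' and introduced by ``We recall the main result from \cite{Namikawa3}'', after which the paper simply uses parts (1)--(3) as input to derive Theorem~\ref{thm:count}. So the ``proof'' in the present paper is exactly the reference you anticipated, and your proposal matches.

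Your sketch of what the argument in \cite{Namikawa3} ought to look like is broadly reasonable but is not something the paper itself supplies, so there is nothing in the paper to compare it against. One small caution on your outline: the claim that two chambers in the same $W$-orbit yield isomorphic terminalizations, and conversely that distinct $W$-orbits give non-isomorphic ones, is not as automatic as you suggest; in \cite{Namikawa3} this requires a separate argument (and the paper here invokes \cite[Lemma 6]{Namikawa3} and \cite[Proposition 2.19]{BPWQuant1} precisely to control how the $W$-translates of $\mathrm{Mov}(\rho)^{\circ}$ tile the complement). But since the present paper treats the whole theorem as a black box, this does not affect the comparison.
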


For a topological space $X$, we abuse notation and let $\pi_0(X)$ denote the number of connected components of $X$. Let $\mathrm{Mov}(\rho)^{\circ} = \mathrm{Mov}(\rho) \smallsetminus \bigcup_{i \in I} (H_{i})_{\R}$. By Theorem \ref{thm:Nam3main} (3) and \cite[Lemma 6]{Namikawa3}, $\mathrm{Mov}(\rho)^{\circ}$ equals $\bigsqcup_{k \in K} \mathrm{Amp}(\rho_k)$. Hence, the number $|K|$ of pairwise non-isomorphic $\Q$-factorial terminalizations of $V / \Gamma$ equals $\pi_0(\mathrm{Mov}(\rho)^{\circ})$. Proposition 2.19 of \cite{BPWQuant1} implies that 
$$
H^2(Y;\R) \smallsetminus \bigcup_{i \in I} (H_{i})_{\R} = \bigsqcup_{w \in W} w(\mathrm{Mov}(\rho)^{\circ}). 
$$
Thus,   
\begin{equation}\label{eq:pizero}
\pi_0 \left( H^2(Y; \R) \smallsetminus \bigcup_i H_i \right) = |W| \cdot |K|. 
\end{equation}
Zaslavsky's Theorem, \cite[Theorem 2.68]{OrlikTeraoBook}, says that the number of connected components of the complement $H^2(Y; \R) \smallsetminus \bigcup_i (H_i)_{\R}$ to the real hyperplane arrangement $\bigcup_i (H_i)_{\R}$ equals the dimension of the cohomology ring of the complement  $H^2(Y; \C) \smallsetminus \mc{D}'$ to the complexification $\mc{D}'$ of the real hyperplane arrangement. As explained above, Theorem \ref{thm:main} and Theorem \ref{thm:Nam3main} (1) imply that $H^2(Y; \C) \smallsetminus \mc{D}' \simeq \mf{c} \smallsetminus \mc{D}$. Hence 
\begin{equation}\label{eq:pizeroC}
\pi_0 \left( H^2(Y; \R) \smallsetminus \bigcup_i H_i \right) = \dim_{\C} H^*(\mf{c} \smallsetminus \mc{D}; \C). 
\end{equation}
Thus, the claim of Theorem \ref{thm:count} follows from equations (\ref{eq:pizero}) and (\ref{eq:pizeroC}). 

\subsection{The wreath product $\s_n \wr G$.}\label{sec:wreath} In this section we deduce Proposition \ref{prop:countWeyl} from Theorem  \ref{thm:count}. The proposition is trivially true for $n = 1$ due the uniqueness of minimal resolutions. Therefore we assume that $n > 1$. In this case, the Namikawa Weyl group $W$ of $V / \Gamma$ equals $\Z_2 \times W_G$. Let $\h$ be the Cartan algebra on which $W_G$ acts. By \cite[Theorem 1.4]{MarsdenWeinsteinStratification}, there is an isomorphism of vector spaces $\mf{c} \simeq \C \times \h$, lifting to an identification of $\CM(\Gamma)$ with a certain moduli space of representations of a deformed preprojective algebra - see \textit{loc. cit.} for details. The proof of Lemmata 4.3 and 4.4 of \cite{GordonQuiver} shows that the set $\mc{D}$ over which the fibers $\CM_{\bc}(\Gamma)$ are singular is the union of hyperplanes in $\C \times \h$ given by 
$$
H_{\lambda,m} := \{ (\alpha, x) \in \C \times \h \ | \ \lambda(x) + m \alpha = 0 \} \quad \textrm{and} \quad \alpha = 0,
$$
where $\lambda \in R$, the root system of the Weyl group $W_G$, and $1-n \le m \le n-1$. In the language of \cite[Chapter 1]{OrlikTeraoBook}, this hyperplane arrangement is the cone over the affine hyperplane arrangement 
$$
\mc{A}=  \left\{ \overline{H}_{\lambda,m} =  \{ x \in \h \ | \ \lambda(x) + m = 0 \} \ | \  \lambda \in R, \ 1-n \le m \le n-1 \ \right\}. 
$$
Therefore \cite[Proposition 2.51]{OrlikTeraoBook} implies that 
$$
\dim_{\C} H^*(\mf{c} \smallsetminus \mc{D}; \C) = 2 \dim_{\C} H^* \left(\h \smallsetminus \bigcup_{H \in \mc{A}} H; \C\right).
$$ 
Since $|W| = 2 |W_G|$, the result follows from the above equality, together with equation (1) of Theorem 1.2 in \cite{athanasiadis} and the fact that $|W_G| = \prod_{i = 1}^{\ell} (e_i + 1)$.

\subsection{Exceptional groups}\label{sec:except} Other than the infinite series $\s_n \wr G$, there are only two exceptional groups that are known to admit symplectic resolutions. These are $Q_8 \times_{\Z_2} D_8$ and $G_4$, and their explicit descriptions as subgroups of $\mathrm{Sp}(\C^4)$ can be found in \cite{smoothsra} and \cite{Singular} respectively. 

First we consider the group $Q_8 \times_{\Z_2} D_8$. As shown in \cite{smoothsra}, we have $\mf{c} = \C \{ \bc_1, \ds, \bc_5 \}$ and $\mc{B}$ consists of five elements $B_1, \ds, B_5$. Each minimal parabolic $\underline{\Gamma}_{B_i}$ is isomorphic to $\Z_2$ and the corresponding quotients $\Xi(B)$ are always trivial. Thus, $W_{B_i} = \s_2$ for all $i$ and if $a_i$ is the generator of $W_{B_i}$ then $a_i \cdot \bc_j = (-1)^{\delta_{ij}} \bc_j$. There are 21 hyperplanes in $\mf{c}$ given by the 16 of the form $\bc_1 \pm \bc_2  \pm \bc_3 \pm \bc_4 \pm \bc_5 = 0$ and the five of the from $\bc_i = 0$. Using\footnote{A copy of the code used to make this calculation is available from the author.} the computer algebra program $\mathrm{MAGMA}$ \cite{MAGMA}, it is possible to calculate that the Poincar\'e polynomial of the Orlik-Solomon algebra equals $1 + 21 t + 170 t^2 + 650 t^3 + 1125 t^4 + 625 t^5$. This implies that the quotient $V / \Gamma$ admits $81$ distinct symplectic resolutions. 

For the group $G_4$, the proof of \cite[Theorem 1.4]{MoPositiveChar} shows that $\mc{D} = H_1 \cup H_2 \cup H_3$, where
$$
H_1 = \{ \bc_1 + \bc_2 = 0 \}, \quad H_2 = \{ \omega  \bc_1 + \omega ^2 \bc_2 = 0 \}, \quad H_3 = \{ \omega^2 \bc_1 + \omega \bc_2 = 0 \},
$$
with $\omega$ a primitive $3$rd root of unity. Since $\dim H_i \cap H_j = 0$ for $i \neq j$, the only dependent subset of $\{ H_1, H_2, H_3 \}$ is $\{ H_1, H_2, H_3 \}$ itself. Therefore, \cite[Definition 3.5]{OrlikTeraoBook} says that the Orlik-Solomon algebra associated to the arrangement $\mc{D}$ is the quotient of the exterior algebra $\wedge^{\idot}(x_1,x_2,x_3)$ by the two-sided ideal generated by 
$$
\partial (x_1 \wedge x_2 \wedge x_3) = x_2 \wedge x_3 - x_1 \wedge x_3 + x_1 \wedge x_2.
$$
Hence, the Orlik-Solomon Theorem \cite[Theorem 5.90]{OrlikTeraoBook} says that 
$$
H^{\idot}(\mf{c} \smallsetminus \mc{D},\C) \simeq \frac{\wedge^{\idot}(x_1,x_2,x_3)}{\langle x_2 \wedge x_3 - x_1 \wedge x_3 + x_1 \wedge x_2 \rangle}.
$$
The Orlik-Solomon algebra has basis $\{ 1, x_1, x_2, x_3, x_1 \wedge x_3 ,x_1 \wedge x_2 \}$; this can be seen directly, or by applying \cite[Theorem 3.43]{OrlikTeraoBook}. The Weyl group for $G_4$ is $\Z_3$. Hence Theorem \ref{thm:count} implies that there are $2$ non-isomorphic symplectic resolutions of $\C^4 / G_4$. This implies that the two symplectic resolutions constructed in \cite{LehnSorger} exhaust all symplectic resolutions.

\def\cprime{$'$} \def\cprime{$'$} \def\cprime{$'$} \def\cprime{$'$}
  \def\cprime{$'$} \def\cprime{$'$} \def\cprime{$'$} \def\cprime{$'$}
  \def\cprime{$'$} \def\cprime{$'$} \def\cprime{$'$} \def\cprime{$'$}
  \def\cprime{$'$}



\end{document}